\documentclass[12pt]{amsart}
\usepackage[T2A]{fontenc}
\usepackage[cp1251]{inputenc}
\usepackage[english]{babel}
\usepackage{amsmath,latexsym,amsthm,amsfonts,tipa,upgreek}
\usepackage{amssymb,amscd,graphpap}

\textwidth  18cm \textheight 25.5cm \hoffset=-3cm \topmargin  -1cm

\newcommand\NN{{\mathbb N}}

\newcommand\QQ{{\mathbb Q}}
\newcommand\RR{{\mathbb R}}
\newcommand\w{{\omega}}
\newcommand\ee{{\varepsilon}}

\newcommand\BB{{\mathcal B}}

\newcommand\PP{{\mathcal P}}
\newcommand\FF{{\mathcal F}}

\newcommand\JJ{{\mathcal J}}
\newcommand\vt{{\Updelta}}
\newcommand\kk{{\varkappa}}

\newtheorem{Th}{Theorem}[section]

\newtheorem{Qs}{Question}[section]

\theoremstyle{definition}
\newtheorem{Cj}{Conjecture}[section]

\begin{document}

\title{Partitions of groups}
\author{Igor~Protasov, Sergii~Slobodianiuk}
\subjclass{20A05, 20F99, 22A15, 06E15, 06E25}
\keywords{Partitions of groups; large, thick, small, thin subsets of a group; filtration; submeasure.}
\date{}
\maketitle

\begin{abstract} 
We classify the subsets of a group by their sizes, formalize the basic methods of partitions and apply them to
partition a group to subsets of prescribed sizes.
\end{abstract}

\section{Introduction}\label{s1}
In {\it Partition Combinatorics}, one of the main areas is {\em Ramsey Theory}. 
The motto of this theory could be "An absolute chaos is impossible": if a set $X$ is endowed with some structure
(say graph or semigroup) then under any finite partition of $X$ some cell of the partition has a good part of the hole structure.
For {\em Ramsey Theory of Groups} see, for examples, \cite{b17}, \cite{b24}.

On the other hand, in {\em Subset Combinatorics of Groups} (see survey \cite{b31}) we need in special partitions of a group into subsets of
pregiven sizes. A lot of concrete problems went from {\em Cromatic Combinatorics} \cite{b35}, 
Topological Algebra \cite[Chapter 13]{b14}, \cite{b25}, Asymptology \cite[Chapter 9]{b44}.
The main goal of this paper is to formalize the basic methods of partitions and demonstrate them in applications.

In Sections~\ref{s2} and~\ref{s3} we classify the diversity of subsets of a group by the sizes and give an 
ultrafilter characterization for each type of subsets.

In the next four sections we expose and apply the following methods of partitions: Grasshopper trick, Filtrations,
Submeasures and Three Sets Lemma.

In Sections~\ref{s8},~\ref{s9} we partition a group into thick and thin subsets and conclude the paper with some 
specific partitions in Section~\ref{s10}.

\section{Diversity of subsets}\label{s2}
Let $G$ be a group with the identity $e$ and let $\kk$ be a cardinal such that $\kk\le|G|$. 
We denote $[G]^{<\kk}=\{Y\subseteq X : |Y|<\kk\}$.

A subset $A$ of $G$ is called
\begin{itemize}
\item{} {\em left (right) $\kk$-large} if there exists $F\in [G]^{<\kk}$ such that $G=FA$ ($G=AF$);
\item{} {\em $\kk$-large} if $A$ is left and right $\kk$-large;
\item{} {\em left (right) $\kk$-small} if $L\setminus A$ is left (right) $\kk$-large for each left (right) $\kk$-large subset $L$;
\item{} {\em $\kk$-small} if $A$ is left and right $\kk$-small;
\item{} {\em left (right) $\kk$-thick} if for every $F\in [G]^{<\kk}$ there exists $a\in A$ such that $Fa\subseteq A$ ($aF\subseteq A$);
\item{} {\em $\kk$-thick} if $A$ is left and right $\kk$-thick;
\item{} {\em left (right) $\kk$-prethick} if there exists $F\in[G]^{<\kk}$ such that $FA$ is left $\kk$-thick ($AF$ is right $\kk$-thick );
\item{} {\em $\kk$-prethick} if $A$ is left and right $\kk$-prethick;
\end{itemize}

The left and right versions of above definitions are symmetric. 
For instance, $A$ is left $\kk$-large if and only if $A^{-1}$ is right $\kk$-large.
We note also that $A$ is left $\kk$-small if and only if $A$ is not left $\kk$-prethick, and $A$ is $\kk$-thick if and only if $L\cap A\neq\varnothing$ for every left $\kk$-large subset $L$.

In the case $\kk=\aleph_0$, we omit $\kk$ in the definitions. 
Thus, $A$ is left large if $G=FA$ for some finite subset $F$ of $G$.

We consider also some kinds of left small subsets omitting the adjective "left" in the following definitions.

A subset $A$ of $G$ is called
\begin{itemize}
\item{} {\em thin} if for every finite subset $F$ of $G$ such that $e\in F$, there exists a finite subset $H$ such 
that $Fa\cap A=\{a\}$ for every $a\in A\setminus H$;
\item{} {\em $n$-thin}, $n\in\NN$ if for every finite subset $F$ of $G$ such that $e\in F$, there exists a finite 
subset $H$ such that $|Fa\cap A|\le n$ for every $a\in A\setminus H$;
\item{} {\em sparse} if every infinite subset $S$ of $G$ contains a finite subset $F\subset S$ such that 
$\bigcap_{g\in F}gA$ is finite.
\end{itemize}

We note that a subset $A$ is thin if and only if, for every $g\in G\setminus\{e\}$, the subset $gA\cap A$ is finite.
Every $n$-thin subset is sparse.

We recall that a family $\JJ$ of subsets of a set $X$ is an {\em ideal} in the Boolean algebra $\PP_X$ of all subsets of $X$
if $X\notin\JJ$ and $A,B\in\JJ$, $C\subseteq A$ imply $A\cup B\in\JJ$ and $C\in\JJ$.
A family $\FF$ of subsets of $X$ is called a {\em filter} if the family $\{X\setminus A:A\in\FF\}$ is an ideal.

An ideal $\JJ$ of subsets of a group $G$ is called {\em left translation invariant} if $gA\in\JJ$ for all $g\in G$, $A\in\JJ$.

Now we fix a left translation invariant ideal $\JJ$ of subsets of $G$ and say that a subset $A$ is 
\begin{itemize}
\item{} {\em $\JJ$-large} if $G=FA\cup I$ for some $F\in[G]^{<\aleph_0}$ and $I\in\JJ$;
\item{} {\em $\JJ$-small} if $L\setminus A$ is $\JJ$-large for every $\JJ$-large subset $L$ of $G$;
\item{} {\em $\JJ$-thick} if $Int_F(A)\notin\JJ$ for each $F\in[G]^{<\aleph_0}$, where $Int_F(A)=\{a\in A:Fa\subseteq A\}$;
\item{} {\em $\JJ$-prethick} if $FA$ is $\JJ$-thick for some $F\in[G]^{<\aleph_0}$.
\end{itemize}
In the case $\JJ=\{\varnothing\}$, we get the above definitions of left large, left small, left thick, left prethick subsets.

We define also the relative versions of thin and sparse subsets. A subset $A$ of $G$ is called 
\begin{itemize}
\item{} {\em $\JJ$-thin} if $gA\cap A\in\JJ$ for every $g\in G\setminus\{e\}$;
\item{} {\em $\JJ$-sparse} if every infinite subset $S$ of $G$ contains a finite subset $F\subset S$ such that 
$\bigcap_{g\in F}gA\in\JJ$.
\end{itemize}

In the case $\JJ=[G]^{<\aleph_0}$, we get the above definitions of thin and sparse subsets. 

We conclude this section with much more general point of view at all these definitions.

A {\it ball structure} is a triple $\BB=(X,P,B)$, where $X$, $P$ are non-empty sets and, for every $x\in X$ and 
$\alpha\in P$, $B(x,\alpha)$ is a subset of $X$ which is called a {\em ball of radius $\alpha$ around $x$}.
It is supposed that $x\in B(x,\alpha)$ for all $x\in X$, $\alpha\in P$. 
The set $X$ is called the {\em support} of $\BB$, $P$  is called the {\em set of radii}.

Given any $x\in X$, $A\subseteq X$, $\alpha\in P$, we set
$$B^*(x,\alpha)=\{y\in X:x\in B(y,\alpha)\},\text{ } B(A,\alpha)=\bigcup_{a\in A}B(a,\alpha).$$

A ball structure $\BB=(X,P,B)$ is called a {\it ballean} if

\begin{itemize}
\item{} for any $\alpha,\beta\in P$, there exist $\alpha',\beta'$ such that, for every $x\in X$,
$$B(x,\alpha)\subseteq B^*(x,\alpha'),\ B^*(x,\beta)\subseteq B(x,\beta');$$

\item{} for any $\alpha,\beta\in P$, there exists $\gamma\in P$ such that, for every $x\in X$,
$$B(B(x,\alpha),\beta)\subseteq B(x,\gamma);$$

\item{} for any $x,y\in X$, there exists $\alpha\in P$ such that $y\in B(x, \alpha)$.
\end{itemize}

For a subset $Y\subseteq X$, we denote $\BB_Y(x,\alpha) = Y\cap B(x,\alpha)$.
A subset $Y$ is called {\em bounded} if $Y\subseteq B(x,\alpha)$ for some $x\in X$, $\alpha\in P$.

Given a ballean $\BB=(X,P,B)$, a subset $A$ of $X$ is called
\begin{itemize}
\item{} {\em large} if there exists $\alpha\in P$ such that $X=B(A,\alpha)$;
\item{} {\em small} if $L\setminus A$ large for every large subset $L$ of $X$;
\item{} {\em thick} if, for every $\alpha\in P$, there exists $a\in A$ such that $B(A,\alpha)\subseteq A$;
\item{} {\em prethick} if there exists $\beta\in P$ such that $B(A,\beta)$ is thick;
\item{} {\em thin} if, for every $\alpha$ there exists a bounded subset $Y$ of $X$ such that $B(a,\alpha)\cap A=\{a\}$
for each $a\in A\setminus Y$;
\item{} {\em sparse} if, for every unbounded subset $Y$ of $A$, there exists $\beta\in P$ such that, for every 
$\alpha\in P$, $B_A(y,\alpha)\setminus B_A(y,\beta)=\varnothing$ for some $y\in Y$;
\item{} {\em scattered} if, for every unbounded subset $Y$ of $A$, there exists $\beta\in P$ such that, for every 
$\alpha\in P$, $B_Y(y,\alpha)\setminus B_Y(y,\beta)=\varnothing$ for some $y\in Y$.
\end{itemize}

Let $G$ be an infinite group and let $\kk$ be an infinite cardinal, $\kk\le|G|$.
Given any $x\in X$ and $F\in[G]^{<\kk}$, we put $$B_l(x,F)=(F\cup\{e\})x\text{, }B_r(x,F)=x(F\cup\{e\})$$
and get two balleans $\BB_l(G,\kk)=(G,[G]^{<\kk},B_l)$, $\BB_r(G,\kk)=(G,[G]^{<\kk},B_r)$.

Clearly, a subset $A$ of $G$ is left $\kk$-large (left $\kk$-small, left $\kk$-thick, left $\kk$-prethick)
if and only if $A$ is large (small, thick, prethick) in the ballean $\BB_l(G,\kk)$.

We note that a subset $A$ of $G$ is thin (sparse) if and only if $A$ is thin (sparse) in the ballean $\BB_l(G,\aleph_0)$.

We say that a subset $A$ of a group $G$ is {\em scattered} if $A$ is scattered in the ballean $\BB_l(G,\aleph_0)$.

{\em Comments.} In the dynamical terminology \cite{b17}, left large and left prethick subsets are known under the 
names syndetic and piecewise syndetic.
The adjectives small, thick and thin in our context appeared in \cite{b9}, \cite{b10}, \cite{b11} respectively.
The sparse subsets were introduced in \cite{b15} and studied in \cite{b19}.

For $\JJ$-small, $\JJ$-thin and $\JJ$-sparse subsets see \cite{b2}, \cite{b3}, \cite{b21}, \cite{b38}.

The balleans can be considered as the asymptotic conterparts of uniform topological spaces (see \cite{b35}, \cite{b44}).
The balleans can also be defined in terms of entourages of the diagonal $\{(x,x):x\in X\}$. 
In this case, they are called {\em course structures} \cite{b45}.

In both contexts the large, small, thick, thin and scattered subsets of a ballean are counterparts of dense, nowhere
dense, open, discrete and scattered subspaces of a uniform space.

\section{Ultracompanions}\label{s3}
For a left invariant ideal $\JJ$ of subsets of an infinite group $G$, we consider a mapping $\Delta_\JJ:\PP_G\to\PP_G$ 
defined by $$\Delta_\JJ(A)=\{g\in G:gA\cap A\notin\JJ\}$$
and say that $\Delta_\JJ(A)$ is a {\em combinatorial derivation relatively the ideal} $\JJ$.
If $\JJ=[G]^{<\aleph_0}$, we omit $\JJ$ and say that $\Delta$ is a combinatorial derivation. In order to characterize 
the subsets of a group by their sizes, we need some ultrafilter versions of the combinatorial derivation.

We endow $G$ with the discrete topology and take the points of $\beta G$, the Stone-$\check{C}$ech compactification 
of $G$, to be the ultrafilters on $G$, with the points of $G$ identified with the principal ultrafilters on $G$. 
The topology on $\beta G$ can be defined by stating that the sets of the form $\overline{A}=\{p\in\beta G: A\in p\}$, 
where $A$ is a subset of $G$, form a base for the open sets. We note the sets of this form are clopen and that, for 
any $p\in\beta G$ and $A\subseteq G$, $A\in p$ if and only if $p\in\overline{A}$. 
We denote $A^*=\overline{A}\cap G^*$, where $G^*=\beta G\setminus G$. 

The universal property of $\beta G$ states that every mapping $f: G\to Y$, where $Y$ is a compact Hausdorff space, 
can be extended to the continuous mapping $f^\beta:\beta G\to Y$. We use this property to extend the group multiplication 
from $G$ to $\beta G$, see \cite[Chapter 4]{b17}, so $\beta G$ becomes a compact right topology semigroup.

Given a subset $A$ of a group $G$ and an ultrafilter $p\in G^*$ we define a {\em $p$-companion} of $A$ by
$$\vt_p(A)=A^*\cap Gp=\{gp: g\in G, A\in gp\},$$
and say that a subset $S$ of $G^*$ is an {\em ultracompanion} of $A$ if $S=\vt_p(A)$ for some $p\in G^*$.

To describe a relationship between ultracompanions and the combinatorial derivation, we denote 
$A_p=\{g\in G:A\in gp\}$ so $\vt_p(A)=A_p p$. Then $$\Delta_\JJ(A)=\bigcap\{A_p^{-1}:p\in \check{\JJ}, A\in p\}$$
where $\check{\JJ}=\{p\in G^*:X\setminus I\in p\text{ for each }p\in\JJ\}$. We observe that $\check{\JJ}$ is closed
in $G^*$ and $gp\in\check{\JJ}$ for all $g\in G$ and $p\in\check{\JJ}$.
\begin{Th}
For a subset $A$ of a group $G$ the following statements hold:

$(i)$ $A$ is $\JJ$-large if and only if $|\vt_p(A)|\leq1$ for each $p\in\check{\JJ}$.

$(ii)$ $A$ is $\JJ$-thick if and only if there exists $p\in\check{\JJ}$ such that $\vt_p(A)=Gp$;

$(iii)$ $A$ is $\JJ$-prethick if and only if there exists $p\in\check{\JJ}$ and $F\in[G]^{<\aleph_0}$ such that $\vt_p(FA)=Gp$;

$(iv)$ $A$ is $\JJ$-small if and only if, for every $p\in\check{\JJ}$ and each $F\in[G]^{<\aleph_0}$, we have $\vt_p(FA)\neq Gp$.
\end{Th}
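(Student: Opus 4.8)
The plan is to translate each size-condition into an ultrafilter statement about $p$-companions, working through the four parts while exploiting the symmetry between large/small and thick/prethick. The key dictionary is: $\vt_p(A)=A_pp$ where $A_p=\{g\in G:A\in gp\}$, and two elements $g_1p,g_2p$ of $Gp$ coincide precisely when $g_1^{-1}g_2\in\{h:hp=p\}$; for a free ultrafilter the map $g\mapsto gp$ is injective on $G$, so counting points of $\vt_p(A)$ amounts to counting cosets in $A_p$. I would first record the elementary observation that for $p\in\check\JJ$ the condition $A\in gp$ is equivalent to $g^{-1}A\in p$, hence $A_p^{-1}=\{g:g^{-1}A\in p\}=\Delta_\JJ$-type preimages; this is what links the companions to the intersection formula $\Delta_\JJ(A)=\bigcap\{A_p^{-1}:p\in\check\JJ,\ A\in p\}$ given before the theorem.

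For part $(i)$ I would argue the contrapositive. If $A$ is not $\JJ$-large then for every finite $F$ the set $G\setminus FA$ is not in $\JJ$, so the family $\{G\setminus FA:F\in[G]^{<\aleph_0}\}$ generates a filter disjoint from $\JJ$, which extends to some $p\in\check\JJ$ with $G\setminus FA\in p$ for all finite $F$; I then show this forces $|\vt_p(A)|\ge2$ by producing two distinct $g_1,g_2$ with $A\in g_ip$. Conversely, if some $p\in\check\JJ$ has $|\vt_p(A)|\ge2$, pick $g_1p\ne g_2p$ in $A^*\cap Gp$ and translate to contradict $\JJ$-largeness, using that $G=FA\cup I$ with $I\in\JJ$ would put $\{g:gp\in\overline A\}$ inside finitely many cosets modulo $\{h:hp=p\}$. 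For parts $(ii)$ and $(iii)$ the governing principle is that $\vt_p(A)=Gp$ says $A\in gp$ for every $g\in G$, i.e.\ $g^{-1}A\in p$ for all $g$, equivalently $p$ lies in $\bigcap_{g\in G}\overline{g^{-1}A}\cap\check\JJ$; thickness $Int_F(A)\notin\JJ$ for all finite $F$ is exactly the statement that the sets $\bigcap_{h\in F}h^{-1}A$ stay $\JJ$-positive, and I would show the family of these sets together with the complements of $\JJ$-members has the finite intersection property, hence extends to the desired $p\in\check\JJ$. Part $(iii)$ is then immediate from $(ii)$ applied to $FA$, since $\JJ$-prethick means $FA$ is $\JJ$-thick for some finite $F$.

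Finally $(iv)$ should follow formally from $(iii)$ by negation: $A$ is $\JJ$-small iff $A$ is not $\JJ$-prethick (the paper already notes small $=$ not prethick in the $\JJ=\{\varnothing\}$ case, and the same duality holds relative to $\JJ$), and not-prethick is exactly the failure of the existential in $(iii)$, giving the universally quantified statement $\vt_p(FA)\ne Gp$ for all $p\in\check\JJ$ and all finite $F$. The main obstacle I anticipate is the careful bookkeeping in $(i)$: one must handle the stabilizer $\{h\in G:hp=p\}$ when passing between ``$|\vt_p(A)|\le1$'' and genuine $\JJ$-largeness, since $\vt_p(A)$ is a subset of $Gp$ and its cardinality counts distinct cosets $g\{h:hp=p\}$ rather than distinct group elements; verifying that $|\vt_p(A)|\le1$ for \emph{all} $p\in\check\JJ$ upgrades to a \emph{single} finite $F$ with $G\setminus FA\in\JJ$ requires a compactness argument on the closed set $\check\JJ$, and making that uniformization precise—rather than the individual companion computations—is where the real work lies.
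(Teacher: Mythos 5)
First, a caveat: the paper itself contains no proof of this theorem (Section~\ref{s3} states it and refers to \cite{b38}, \cite{b39}), so your proposal has to be judged on its own terms and against those sources.

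The genuine gap is in part $(i)$, and it is fatal in the form you set it up. The printed inequality is a misprint: the correct characterization, which is the one proved in \cite{b39}, is that $A$ is $\JJ$-large if and only if $\vt_p(A)\neq\varnothing$ for every $p\in\check{\JJ}$, i.e. $|\vt_p(A)|\geq 1$, not $\leq 1$. Taken literally, both implications you plan to prove are false, and your own construction exposes this. If $A$ is not $\JJ$-large, the filter generated by $\{G\setminus FA:F\in[G]^{<\aleph_0}\}\cup\{G\setminus I:I\in\JJ\}$ does extend to some $p\in\check{\JJ}$; but this $p$ satisfies $G\setminus gA\in p$ for every $g\in G$, hence $A\notin g^{-1}p$ for every $g$, hence $\vt_p(A)=\varnothing$. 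So you can never ``produce two distinct $g_1,g_2$ with $A\in g_ip$'': the construction forces cardinality $0$, not $\geq 2$. The converse implication, ``$\JJ$-large implies $|\vt_p(A)|\leq 1$'', fails already for $G=\ZZ$, $\JJ=[\ZZ]^{<\aleph_0}$, $A=2\ZZ$: here $G=A\cup(1+A)$, yet for every $p\in\check{\JJ}$ the set $A_p$ equals $2\ZZ$ or $2\ZZ+1$, so $\vt_p(A)=A_p p$ is infinite, because $g\mapsto gp$ is injective on $G$ for each $p\in G^*$ (the stabilizer $\{h:hp=p\}$ is trivial, by the three-sets lemma of Section~\ref{s7}). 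This triviality also means that your worry about ``counting cosets modulo the stabilizer'', and the closing compactness uniformization you anticipate as ``where the real work lies'', are aimed at a statement that is simply false. Had you followed your construction to its actual conclusion, it would have corrected the statement and proved it: non-largeness yields $p\in\check{\JJ}$ with $\vt_p(A)=\varnothing$; conversely, if $G=FA\cup I$ with $I\in\JJ$, then for $p\in\check{\JJ}$ we have $I\notin p$, so $FA\in p$, so $gA\in p$ for some $g\in F$, so $g^{-1}p\in\vt_p(A)$.

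Parts $(ii)$ and $(iii)$ of your outline are essentially correct and are the standard argument: $\vt_p(A)=Gp$ means exactly $g^{-1}A\in p$ for all $g\in G$, and since $Int_F(A)=\bigcap_{f\in F\cup\{e\}}f^{-1}A$, $\JJ$-thickness is precisely the finite intersection property of $\{Int_F(A):F\in[G]^{<\aleph_0}\}\cup\{G\setminus I:I\in\JJ\}$, while $(iii)$ is $(ii)$ applied to $FA$. Two debts remain, though. You must check that the ultrafilter extension can be chosen free, i.e. in $G^*$, which is automatic when $[G]^{<\aleph_0}\subseteq\JJ$ but needs a word otherwise. More substantially, $(iv)$ does not follow ``formally'' from $(iii)$: you need the lemma that $A$ is $\JJ$-small if and only if $A$ is not $\JJ$-prethick, and the paper asserts this duality only for the $\kk$-versions, not relative to an ideal. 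Proving it for a general left translation invariant ideal requires showing that $B$ is $\JJ$-thick iff $G\setminus B$ is not $\JJ$-large, and that left translates and finite unions of $\JJ$-small sets are $\JJ$-small; both steps use the left invariance of $\JJ$, and this is genuine (if short) work rather than a formal negation.
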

\begin{Th}
A subset $A$ of $G$ is $n$-thin, $n\in\NN$ if and only if  $|\vt_p(A)|\le n$ for each $p\in G^*$.
\end{Th}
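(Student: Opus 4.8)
The plan is to prove both implications by contraposition, converting the two-sided combinatorial condition in the definition of $n$-thinness into statements about which translates $g^{-1}A$ belong to $p$. Throughout I use the identity $A\in gp\Leftrightarrow g^{-1}A\in p$, so that $\vt_p(A)=\{gp:g^{-1}A\in p\}$, together with the fact that every member of an ultrafilter from $G^*$ is infinite.

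For necessity I assume some $p\in G^*$ has $|\vt_p(A)|\ge n+1$ and deduce that $A$ is not $n$-thin. Pick $g_0,\dots,g_n\in G$ with the ultrafilters $g_ip$ pairwise distinct and $g_i^{-1}A\in p$; distinctness of the $g_ip$ forces the $g_i$ to be distinct. The set $B=\bigcap_{i=0}^{n}g_i^{-1}A$ lies in $p$, hence is infinite, and for every $a\in B$ the $n+1$ elements $g_0a,\dots,g_na$ are distinct and lie in $A$. Putting $F=\{g_ig_0^{-1}:0\le i\le n\}\cup\{e\}$ and $b=g_0a\in A$, each $g_ia=(g_ig_0^{-1})b$ lies in $Fb\cap A$, so $|Fb\cap A|\ge n+1$. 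As $a$ runs over the infinite set $B$, the point $b=g_0a$ runs over an infinite subset of $A$; thus no finite $H$ can satisfy the defining inequality for this $F$, and $A$ is not $n$-thin.

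For sufficiency I again argue contrapositively. If $A$ is not $n$-thin, fix a finite $F\ni e$ for which $\{a\in A:|Fa\cap A|\ge n+1\}$ is infinite. Since $|Fa\cap A|=|\{f\in F:fa\in A\}|$ and $F$ is finite, a pigeonhole argument yields distinct $f_0,\dots,f_n\in F$ and an infinite $S\subseteq A$ with $f_ia\in A$, i.e.\ $S\subseteq f_i^{-1}A$, for all $a\in S$ and all $i$. Any $p\in S^*$ then gives $f_ip\in\vt_p(A)$ for every $i$. The obstacle is that the $f_ip$ need not be pairwise distinct, so an arbitrary ultrafilter on $S$ does not obviously produce a companion of size $\ge n+1$; this is the only genuinely non-routine point.

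I would remove this obstacle with a separation lemma: given finitely many distinct $f_0,\dots,f_n$ and an infinite $S\subseteq G$, there is an infinite $T\subseteq S$ with $f_0T,\dots,f_nT$ pairwise disjoint. One obtains $T$ as an infinite independent set in the graph on $S$ joining $s\ne s'$ whenever $s'=f_j^{-1}f_is$ for some $i\ne j$; this graph has maximum degree at most $2|\{f_j^{-1}f_i:i\ne j\}|$, so a greedy construction on a countable subset of $S$ yields such a $T$. Then $f_iT\cap f_jT=\varnothing$ for $i\ne j$, since $f_it=f_jt'$ with $t=t'$ would force $f_i=f_j$ and with $t\ne t'$ would make $t,t'$ adjacent. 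Choosing $p\in T^*$, the ultrafilter $f_ip$ contains $f_iT$ for each $i$, and as these sets are pairwise disjoint the $f_ip$ are pairwise distinct, whence $|\vt_p(A)|\ge n+1$. Apart from this lemma, the argument is bookkeeping with the translation identity for $\beta G$.
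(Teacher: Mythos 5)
Your proof is correct in both directions, but note that the paper itself contains no proof of this theorem (it is quoted from \cite{b38}, \cite{b39}), so the comparison is with the standard argument of those sources, which follows exactly your contrapositive scheme: from $|\vt_p(A)|\ge n+1$ one gets an infinite set $\bigcap_{i\le n}g_i^{-1}A\in p$ whose translate $g_0\bigl(\bigcap_{i\le n}g_i^{-1}A\bigr)\subseteq A$ defeats $n$-thinness, and conversely one applies pigeonhole over the finitely many $(n+1)$-element subsets of $F$ to produce an infinite $S$ and distinct $f_0,\dots,f_n$ with $S\subseteq\bigcap_{i\le n}f_i^{-1}A$. The only place you diverge is the point you single out as genuinely non-routine: the standard argument needs no separation lemma there, because for a group $G$ and distinct $f_i,f_j\in G$ one automatically has $f_ip\neq f_jp$ for \emph{every} $p\in\beta G$. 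Indeed, left multiplication by $h=f_j^{-1}f_i\neq e$ has no fixed points in $G$, so by the three sets lemma of Section~\ref{s7} one may write $G=Y_1\cup Y_2\cup Y_3$ with $hY_k\cap Y_k=\varnothing$ for $k\in\{1,2,3\}$; whichever cell $Y_k$ belongs to $p$ gives $Y_k\in p$ and $hY_k\in hp$ with $Y_k\cap hY_k=\varnothing$, whence $hp\neq p$ (this injectivity of $g\mapsto gp$ is also standard, see \cite{b17}). With that fact, your pigeonhole step already yields $n+1$ distinct points $f_0p,\dots,f_np$ of $\vt_p(A)$ for an arbitrary $p\in S^*$. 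Your separation lemma, with its bounded-degree greedy construction of $T$, is correct, so nothing is lost; it simply re-derives in localized form a weaker version of this standard fact, buying a self-contained argument at the cost of extra work.
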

\begin{Th}
A subset $A$ of $G$ is sparse if and only if  $\vt_p(A)$ is finite for each $p\in G^*$.
\end{Th}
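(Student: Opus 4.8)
The plan is to reduce the statement to a condition on the sets $A_p=\{g\in G:A\in gp\}=\{g\in G:g^{-1}A\in p\}$, using the identity $\vt_p(A)=A_pp$ recorded before Theorem~3.1. First I would establish the auxiliary fact that for every $p\in G^*$ the map $g\mapsto gp$ is injective on $G$: if $gp=hp$ with $t=h^{-1}g\neq e$, then $tp=p$, whereas one can partition $G$ into two or three cells (two when $t$ has infinite or even order, three when $t$ has odd finite order) so that no cell $G_i$ meets its translate $tG_i$; the cell $G_0\in p$ then satisfies $tG_0\in tp=p$ and $G_0\in p$ although $G_0\cap tG_0=\varnothing$, a contradiction. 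Consequently $|\vt_p(A)|=|A_p|$, and the theorem becomes equivalent to the assertion that $A$ is sparse if and only if $A_p$ is finite for each $p\in G^*$. I would prove both implications by contraposition.

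For the implication that sparseness forces all companions to be finite, suppose $A_p$ is infinite for some $p\in G^*$, and put $S=A_p^{-1}$, an infinite subset of $G$. For any finite $F\subseteq S$, writing $F=\{g_1^{-1},\dots,g_k^{-1}\}$ with $g_i\in A_p$, each $g_i^{-1}A$ lies in $p$, hence $\bigcap_{g\in F}gA=\bigcap_{i}g_i^{-1}A\in p$. Since every member of a free ultrafilter is infinite, $\bigcap_{g\in F}gA$ is infinite for every finite $F\subseteq S$, so $S$ witnesses that $A$ is not sparse.

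For the converse, suppose $A$ is not sparse, so there is an infinite $S\subseteq G$ with $\bigcap_{g\in F}gA$ infinite for every finite $F\subseteq S$. Then the family $\{gA:g\in S\}$ together with the cofinite filter has the finite intersection property (each such finite intersection is infinite, hence meets every cofinite set), so it extends to a free ultrafilter $p\in G^*$ with $gA\in p$ for all $g\in S$. Since $gA\in p$ is equivalent to $g^{-1}\in A_p$, we obtain $S^{-1}\subseteq A_p$, whence $A_p$ is infinite and $|\vt_p(A)|=|A_p|$ is infinite as well.

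The routine parts are the finite-intersection bookkeeping and the passage between $gA\in p$ and $g^{-1}\in A_p$. The one genuinely delicate point is the injectivity of $g\mapsto gp$, which is exactly what converts the (easy) infiniteness of $A_p$ into the infiniteness of the companion $\vt_p(A)$ in the converse direction: without it an infinite $A_p$ could in principle collapse to a finite image. The partition/coloring argument above is what rules this out, and it is the step I would be most careful to get right.
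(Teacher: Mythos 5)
Your proof is correct. The paper states this theorem without proof (its Comments defer to \cite{b39}), and your argument --- reducing $|\vt_p(A)|=|A_p|$ via injectivity of $g\mapsto gp$ (justified by the three-sets-lemma partition, the very tool the paper records in Section~\ref{s7}), then handling both directions by contraposition, using that members of a free ultrafilter are infinite in one direction and extending $\{gA:g\in S\}$ together with the cofinite filter to a free ultrafilter in the other --- is essentially the same argument as in that reference, with the delicate injectivity step handled correctly.
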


Let $(g_n)_{n\in\w}$ be an injective sequence  in a group $G$. The set 
$$\{g_{i_1}g_{i_2}\dots g_{i_n}:0\le i_1<i_2<\dots i_n<\w\}$$ is called an {\em $FP$-set} \cite[p. 406]{b17}.

Given a sequence $(b_n)_{n\in\w}$ in $G$, we say that the set 
$$\{g_{i_1}g_{i_2}\dots g_{i_n}b_{i_n}:0\le i_1<i_2<\dots i_n<\w\}$$ is a piecewise {\em shifted $FP$-set}.

\begin{Th} For a subset $A$ of a group $G$, the following statements are equivalent 
\begin{itemize}
\item[{(i)}] $A$ is scattered;
\item[{(ii)}] for every infinite subset $Y$ of $A$, there exists $p\in Y^*$ such that $\vt_p(Y)$ is finite;
\item[{(iii)}] $\vt_p(A)$ is discrete in $G^*$ for each $p\in G^*$;
\item[{(iv)}] $A$ contains no piecewise shifted $FP$-sets;
\end{itemize}\end{Th}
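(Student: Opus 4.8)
The plan is to unfold the ballean definition and then split the equivalences into an analytic cluster (i)$\Leftrightarrow$(ii)$\Leftrightarrow$(iii), which is bookkeeping with ultracompanions in the spirit of the sparse characterization above, and a combinatorial bridge to (iv) that carries the real content. In $\BB_l(G,\aleph_0)$ the bounded sets are exactly the finite ones and $B_l(y,F)=(F\cup\{e\})y$, so $A$ is scattered precisely when every infinite $Y\subseteq A$ admits a finite $F_0\ni e$ such that for every finite $F$ there is $y\in Y$ with $\{f\in F:fy\in Y\}\subseteq F_0$. I would also record that scatteredness is hereditary, so it is enough to test this stabilization on $Y$ itself, and that the negation reads: there is an infinite $Y\subseteq A$ such that for every finite $F_0$ there is a finite $F$, disjoint from $F_0$, with $Y\subseteq\bigcup_{f\in F}f^{-1}Y$.

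For (i)$\Leftrightarrow$(ii) the key computation is that, for $p\in Y^*$, $\vt_p(Y)=Y_pp$ with $Y_p=\{g:g^{-1}Y\in p\}$, and that $Y_p\subseteq F_0$ as soon as $p$ contains all the sets $Z_F=\{y\in Y:\{f\in F:fy\in Y\}\subseteq F_0\}$; indeed, for $g\notin F_0$ and $F\ni g$ the set $\{y:gy\in Y\}$ misses $Z_F$, so $g^{-1}Y\notin p$. Thus for (i)$\Rightarrow$(ii) I would take a cluster point $p\in Y^*$ of the nested nonempty family $\{Z_F\}$ (compactness of $\overline Y$ supplies it, the only delicate point being to keep $p$ nonprincipal) and conclude $\vt_p(Y)\subseteq F_0p$ is finite. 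The converse reverses the computation: a finite companion bounds $Y_p$ and hence produces the stabilizing $F_0$, modulo the same stabilizer bookkeeping used for the sparse characterization.

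For (ii)$\Leftrightarrow$(iii) I would use that $gp\in\vt_p(A)$ is isolated in $\vt_p(A)$ iff some $C\in gp$ gives $\vt_p(A\cap C)=\{gp\}$. If some $\vt_p(A)$ carried a non-isolated point, repeatedly failing isolation would let me peel off an infinite $Y\subseteq A$ all of whose ultracompanions over $Y^*$ are infinite, contradicting (ii); conversely, discreteness of every companion together with compactness of $Y^*$ yields, for each infinite $Y$, an ultrafilter with finite companion.

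The bridge to (iv) is the main obstacle. The easy half shows that a piecewise shifted $FP$-set $S$ is not scattered: writing $s=g_{i_1}\cdots g_{i_n}b_{i_n}$ one has $g_{i_0}s\in S$ for every $i_0<i_1$, so on a suitable infinite $Y\subseteq S$ and for arbitrarily late finite $F$ the covering $Y\subseteq\bigcup_{f\in F}f^{-1}Y$ holds, defeating the stabilization of the first paragraph (equivalently, $S$ carries a Hindman-type idempotent $p$ with $\vt_p(S)$ infinite and non-discrete, contradicting (iii)). The hard half is the contrapositive of the remaining implication: from a non-scattered $A$ I must build a piecewise shifted $FP$-set inside $A$. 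Here I would run a recursion on the covering property---iterating that for every finite $F_0$ there is a far-out finite $F$ with $Y\subseteq\bigcup_{f\in F}f^{-1}Y$ lets me extend left-products $f_k\cdots f_1b\in A$ from seed points $b$ while forcing the successive factors to carry strictly increasing indices (using the freedom to take $F$ disjoint from the growing $F_0$). Reading the chosen factors as generators $g_n$ and the seeds as shifts $b_{i_n}$ produces the piecewise shifted $FP$-set. Guaranteeing at once that every finite product stays in $A$, that the generators are injective, and that the indices diverge is the crux; I expect to organize this on the ultrafilter side (iii), anchoring the recursion to a Hindman-type idempotent supported on the derived sets that the non-scatteredness provides.
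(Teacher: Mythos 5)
This survey states the theorem without proof (the results of Section~\ref{s3} are quoted from the cited references, here the preprint \cite{b5}), so your proposal can only be measured against what the equivalences actually require; on that score it has a reasonable architecture but leaves the two genuinely hard implications unproved. In the analytic cluster, your (ii)$\Rightarrow$(i) computation is correct, and the nonprincipality issue you flag in (i)$\Rightarrow$(ii) is real but repairable: if some $Z_F$ were finite, the downward directed family $\{Z_{F'}:F'\supseteq F\}$ of nonempty finite sets would have a common element $y_0$, forcing $\{f\in G:fy_0\in Y\}=Yy_0^{-1}\subseteq F_0$, which is impossible since $Yy_0^{-1}$ is infinite; hence every $Z_F$ is automatically infinite and a nonprincipal $p$ exists. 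What is not repairable as stated is your claim that (iii)$\Rightarrow$(ii) follows from ``discreteness of every companion together with compactness of $Y^*$''. Discreteness gives, for each $q\in Y^*$, some $C_q\in q$ with $\vt_q(Y\cap C_q)=\{q\}$, and compactness gives $Y=C_{q_1}\cup\dots\cup C_{q_n}\cup E$ with $E$ finite; but for an arbitrary $p\in Y^*$ all this yields is $\vt_p(Y)\subseteq\vt_p(C_{q_1})\cup\dots\cup\vt_p(C_{q_n})$, and the trivialization of $C_{q_j}$ is a statement about the single ultrafilter $q_j$ --- it gives no control whatsoever of $\vt_p(C_{q_j})$ at $p\neq q_j$. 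The isolation witnesses do not transfer between ultrafilters, no finite companion is produced, and this implication carries essentially the full depth of the theorem; it is not bookkeeping.

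The same missing idea resurfaces at the bridge to (iv), which you yourself call the crux. In your ``easy half'', prepending $g_{i_0}$ does not establish the covering property: every infinite $Y\subseteq S$ contains elements of minimal first index (the element $g_0b_0\in S$ has no prepender $g_{i_0}$ at all), so prepending alone fails for every choice of $Y$; one needs in addition the stripping translations $(g_{i_1}\cdots g_{i_k})^{-1}$ together with a compactness/exceptional-set argument (any $f\in Yy^{-1}$ serves as a prepender for finitely many bad points), or else the idempotent route you mention only parenthetically. In the hard half, iterating the covering property produces chains $f_k\cdots f_1b$ in which each element receives its own prepender from $F_k$; a piecewise shifted $FP$-set instead requires one fixed sequence $(g_n)$ such that at every stage $g_mb_m$ lies in $D_m=\bigcap_{h\in FP_{<m}\cup\{e\}}h^{-1}A$ and $g_m$ is chosen so that $D_{m+1}=D_m\cap g_m^{-1}D_m$ remains usable at all later stages. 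The covering property controls single translates $Y\cap f^{-1}Y$, not these iterated intersections, so your invariant does not propagate. The recursion does run once one has $q\in A^*$ non-isolated in $\vt_q(A)$ (keep every $D_m\in q$; non-isolation supplies infinitely many admissible $g_m$, which also gives injectivity and a valid $b_m\in g_m^{-1}D_m$); but manufacturing such a $q$ --- your ``Hindman-type idempotent supported on the derived sets'' --- out of mere non-scatteredness is precisely the substance of the theorem, and the proposal never exhibits a closed subsemigroup of $\beta G$ to which Ellis' lemma could be applied. Until (iii)$\Rightarrow$(ii) and the construction of a piecewise shifted $FP$-set inside an arbitrary non-scattered set are actually carried out, what you have is a plan, not a proof.
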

{\em Comments.} The combinatorial derivation was introduced in \cite{b33} and studied in \cite{b12}, \cite{b34}, \cite{b38}.

The results of this sections from \cite{b5}, \cite{b38}, \cite{b39}.
For ultracompanions of subsets of balleans see \cite{b8}.

\section{The grasshopper trick}\label{s4}
In \cite{b9}, Bella and Malykhin asked the following question: 
{\em can every infinite group $G$ be partitioned into two large subsets?}

The positive answer to this question was obtained in \cite{b27} (see also \cite[Chapter 3]{b35}) 
in the following strong form.
\begin{Th}\label{t4}
Every infinite group $G$ can be partitioned into $\aleph_0$ large subsets.
\end{Th}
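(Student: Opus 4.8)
The plan is to recast the statement as the construction of a colouring $\chi\colon G\to\w$ all of whose fibres $A_n=\chi^{-1}(n)$ are large, i.e.\ are two-sided nets in the sense that $G=F_nA_n=A_nF_n$ for some finite $F_n$ depending on $n$. The guiding picture is the abelian model $G=\ZZ$: taking $A_n=2^n\ZZ\setminus 2^{n+1}\ZZ$ partitions $\ZZ\setminus\{0\}$ into sets that are syndetic with gaps $2^{n+1}$, so the cells grow sparser as $n$ increases but each remains a net. Thus I do not seek a single finite witness valid for all cells; I only arrange that cell $n$ sits on a net whose (possibly large) witness $F_n$ is allowed to depend on $n$. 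The target invariant of the recursion is therefore: lay down each colour on a progressively sparser net, in such a way that the colours together exhaust $G$.

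The engine is the grasshopper trick. I would traverse $G$ along a fixed well-order $\{g_\alpha:\alpha<|G|\}$ and let a ``grasshopper'' move through the group, colouring fresh points as it lands, with every jump length confined to a finite set so that the sequence of points receiving a given colour forms a net. Concretely, each time colour $n$ is to be deposited the grasshopper jumps from its current position to a nearby still-uncoloured point (hopping over the finitely many points already treated), and the bounded step length guarantees that the accumulated colour-$n$ points leave no unbounded left- or right-translation gap; later colours are visited with larger, but still bounded, spacing. Since every element is coloured as the well-order is traversed, coverage and disjointness are automatic, and the whole difficulty is concentrated in keeping each of the infinitely many colour classes a net while they partition $G$.

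The main obstacle is twofold. First, largeness in this paper is \emph{two-sided}, so the grasshopper must be steered to meet the left demand $G=F_nA_n$ and the right demand $G=A_nF_n$ simultaneously; the left and right requirements compete for the same fresh landing points, and the jumps must be taken long enough that fulfilling one requirement for a given target element does not destroy a requirement already secured for an earlier element. Second, the scheduling must ensure that \emph{every} colour is revisited cofinally often and with bounded gaps on both sides, so that all $\aleph_0$ classes end up being nets at once rather than finitely many of them absorbing the mass; this is handled by a transfinite recursion in which, at each stage, only finitely many covering constraints are active while the uncoloured remainder of $G$ still has full cardinality $|G|$. It is precisely the infiniteness of $G$, together with the grasshopper's freedom to jump arbitrarily far past the already-coloured set, that allows the finitely many active left and right constraints to be satisfied together at every step, yielding the desired partition into $\aleph_0$ large subsets.
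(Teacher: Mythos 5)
Your proposal correctly names the grasshopper trick as the engine, but it omits the structural dichotomy on which the paper's proof rests, and this omission is fatal for a whole class of groups. A ``jump of bounded length'' only makes sense relative to the word metric of a finite symmetric set $S$, i.e.\ inside the Cayley graph $Cay(G,S)$, and bounded jumps can only traverse an infinite part of $G$ if the subgroup $\langle S\rangle$ is infinite. If $G$ is locally finite (e.g.\ $G=\bigoplus_{n<\w}\ZZ_2$ or a Pr\"ufer group), then every finite $S$ generates a finite subgroup, every component of $Cay(G,S)$ is finite, and a grasshopper confined to bounded steps can never leave a finite set: there is no metric in which all of $G$ is reachable by bounded jumps, so the phrase ``hopping over the finitely many points already treated'' to a ``nearby'' point has no meaning. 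The paper treats exactly this situation as a separate case, using an increasing chain of finite subgroups $H_0\subset H_1\subset\dots$ and the Joint Transversal Theorem applied to the partitions of each $H_{n+1}$ into left and right cosets of $H_n$; nothing in your recursion substitutes for this. The dichotomy (either $G$ contains an infinite finitely generated subgroup, or $G$ contains an increasing chain of finite subgroups) is what makes the two cases exhaustive, and your argument covers at most the first horn.

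Even on that first horn, where $G$ has an infinite finitely generated subgroup $H=\langle S\rangle$, your recursion lacks the ingredient that actually produces largeness. You assert that ``the bounded step length guarantees that the accumulated colour-$n$ points leave no unbounded left- or right-translation gap,'' but a transfinite greedy traversal along an arbitrary well-order does not respect the geometry of $Cay(G,S)$: consecutive elements of the well-order may lie in different cosets of $H$ (hence at infinite distance), and largeness of $A_n$ requires one finite witness $F_n$ valid for \emph{all} of $G$, whereas your schedule only satisfies ``finitely many active constraints'' at each of $|G|$ many stages, which accumulates unboundedly many constraints with no uniform bound on the witness. The paper's mechanism is precisely what closes this gap: each component of $Cay(G,S)$ (a coset of $H$) is partitioned into grasshopper rays using the Grasshopper Theorem (every finite graph admits a cyclic enumeration with consecutive distances $\le 3$), and then each ray $v_0,v_1,v_2,\dots$, whose consecutive points are at distance $\le 3$, is coloured so that every colour occurs with bounded gaps along the ray; bounded gaps along a bounded-step path translate into finite translation witnesses, and the left/right versions are handled by the symmetry of $S$ and the coset structure. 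Without the reduction to rays you have no control of the gaps, and without the case dichotomy you have no rays at all.
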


To sketch the proof of this theorem, we consider two cases.\\
Case $1$: $G$ has an increasing chain $H_0\subset H_1\subset\dots\subset H_n\subset\dots$ of finite subgroups.\\
Case $2$: $G$ has an infinite finitely generating subgroup $H$.

In the first case, we use the Joint Transversal Theorem stating that given two partitions $\PP$ and $\PP'$ of a set 
$X$ such that $|P|=|P'|=n$, $n\in\NN$ for all $P\in\PP$ and $P'\in\PP'$ there exists a nonempty subset $T$ of $X$ 
such that $|T\cap P|=|T\cap P'|=1$ for all $P\in\PP$, $P'\in\PP'$.

In the second more delicate case, we use so-called {\em grasshopper trick}.
Let $\Gamma(V,E)$ be a connected graph with the set of vertices $V$ and the set of edges $E$.
We endow $V$ with the path metric: $d(u,v)$ is the length of a shortest path between $u$ and $v$.

If $\Gamma$ is finite, a numeration  $v_0, v_1, \dots , v_n$ of $V$ is called a {\em grasshopper cycle} if 
$d(v_0,v_1)\le3,\ d(v_1,v_2)\le3,\dots,\ d(v_{n-1},v_n)\le3,\ d(v_n,v_0)\le3$.
By the Grasshopper Theorem \cite[p. 26]{b35}, every finite graph admits a grasshopper cycle.

If $\Gamma$ is infinite, we say that an injective sequence $(v_n)_{n\in\w}$ in $V$ is a {\em grasshopper path} if
$d(v_i,v_{i+1})\le3$ for each $i<\w$. If there exists a grasshopper path coming through all vertices of $\Gamma$,
we say that $\Gamma$ is a {\em grasshopper ray}. If $\Gamma$ is countable, applying the Grasshopper Theorem, we can
partition, the set $V$ so that, for every cell $P$ of the partition, the induced graph $\Gamma[P]$ is a grasshopper ray.

Comming back to the second case, we take a finite symmetric system  $S$ of generators of $H$ and consider the Cayley
graph $Cay(G,S)$. The set of vertices of $Cay(G,S)$ is $G$, and the set of edges is 
$\{\{x,y\}:x,y\in G, x\neq y, x^{-1}y\in S\}$. Then we partition each connected components of  $Cay(G,S)$ into 
grasshopper rays and taking an appropriate countable partition of each grasshopper ray, we get a desired 
countable partition of $G$ into large subsets.

\section{Filtrations}\label{s5}
Let $G$ be an infinite group with the identity $e$, $\kk$ be an infinite cardinal.
A family $\{G_\alpha:\alpha<\kk\}$ of subgroups of $G$ is called a {\em filtration} if the following conditions hold
\begin{itemize}
\item[(1)] $G_0=\{e\}$ and $G=\bigcup_{\alpha<\kk}G_\alpha$;
\item[(2)] $G_\alpha\subset G_\beta$ for all $\alpha<\beta<\kk$;
\item[(3)] $\bigcup_{\alpha<\beta}G_\alpha=G_\beta$ for each limit ordinal $\beta<\kk$.
\end{itemize}
Clearly, a countable group $G$ admits a filtration if and only if $G$ is not finitely generated.
Every uncountable group $G$ of cardinality $\kk$ admits a filtration satisfying the additional condition $|G_\alpha|<\kk$ for each $\alpha<\kappa$.

For each $0<\alpha<\kappa$, we decompose $G_{\alpha+1}\setminus G_\alpha$ into right cosets by 
$G_\alpha$ and choose some system $X_\alpha$ of representatives so $G_{\alpha+1}\setminus G_\alpha=G_\alpha X_\alpha$.
We take an arbitrary element $g\in G\setminus\{e\}$ and choose the smallest subgroup $G_\alpha$ with $g\in G_\alpha$.
By $(3)$, $\alpha=\alpha_1+1$ for some ordinal $\alpha_1<\kk$. 
Hence, $g\in G_{\alpha+1}\setminus G_{\alpha_1}$ and there exist $g_1\in G_{\alpha_1}$ and $x_{\alpha_1}\in X_{\alpha_1}$ such that $g=g_1x_{\alpha_1}$.
If $g_1\neq e$, we choose the ordinal $\alpha_2$ and elements $g_2\in G_{\alpha_2+1}\setminus G_{\alpha_2}$ and $x_{\alpha_2}\in X_{\alpha_2}$ such that $g_1=g_2x_{\alpha_2}$.
Since the set of ordinals $\{\alpha:\alpha<\kk\}$ is well-ordered, after finite number $s(g)$ of steps, we get the representation
$$g=x_{\alpha_{s(g)}}x_{\alpha_{s(g)-1}}\dots x_{\alpha_2}x_{\alpha_1},\text{ }x_{\alpha_i}\in X_{\alpha_i}.$$
We note that this representation is unique and put
$$\gamma_1(g)=\alpha_1,\text{ }\gamma_2(g)=\alpha_2,\text{ }\dots,\gamma_{s(g)}(g)=\alpha_{s(g)}.$$

By Theorem~\ref{t4}, every infinite group $G$ can be partitioned into $\aleph_0$ $\aleph_0$-large subset.
If $G$ is amenable (in particular Abelian) and $|G|\ge\aleph_1$ then $G$ cannot be partitioned into $\aleph_1$
left $\aleph_0$-large subsets because $\mu(A)>0$ for each left $\aleph_0$-large subset of $G$ and every left 
invariant Banach measure $\mu$ on $G$.
\begin{Th}\label{t5.1}
Every infinite group $G$ of cardinality $\kk$ can be partitioned into $\kk$ left $\aleph_1$-large subsets.
\end{Th}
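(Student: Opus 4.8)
The plan is to dispose of the trivial case and then exploit the filtration and the canonical representation $g=x_{\gamma_{s(g)}(g)}\cdots x_{\gamma_1(g)}$ just introduced. If $|G|=\aleph_0$, every nonempty $A\subseteq G$ is already left $\aleph_1$-large: for any $a\in A$ the set $F=Ga^{-1}$ satisfies $Fa=G$ and $|F|=\aleph_0<\aleph_1$, so any splitting of $G$ into $\aleph_0$ nonempty pieces proves the case $\kk=\aleph_0$. Hence I assume $G$ uncountable and fix a filtration $\{G_\alpha:\alpha<\kk\}$ with $|G_\alpha|<\kk$, together with the levels $\gamma_i$ and the representatives $X_\alpha$.

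The first reduction I would record is a coset criterion for left $\aleph_1$-largeness. Replacing a countable witness $F$ by the countable subgroup $H=\langle F\rangle$ it generates, one sees that $A$ is left $\aleph_1$-large if and only if $HA=G$ for some countable subgroup $H$, i.e.\ if and only if $A$ meets every right coset $Hg$. The filtration already produces such large sets: for each $\beta$ the ``upper'' set $A^{(\beta)}=\{e\}\cup\{g:\gamma_{s(g)}(g)\ge\beta\}$ consists of exactly one canonical representative (the product of the factors of index $\ge\beta$) from each right coset of $G_\beta$, so $G_\beta A^{(\beta)}=G$; and when $\kk=\aleph_1$ every $G_\beta$ is countable, so each $A^{(\beta)}$ is left $\aleph_1$-large outright. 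These sets are nested rather than disjoint, so they only motivate the construction.

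The construction I propose is to spread one covering transversal across each level so that the classes become disjoint. Enumerate the \emph{requirements} $(\beta,C)$, where $\beta$ runs over the levels $1\le\beta<\kk$ and $C$ over the right cosets of a countable witness subgroup $H_\beta$ (one may take $H_\beta=G_\beta$ when $\kk=\aleph_1$); there are $\kk$ of them. I would build, by transfinite recursion along this enumeration, an injective selection $P(\beta,C)\in C$ of one representative per requirement, colour $P(\beta,C)$ by $\beta$, and dump every point outside the range of $P$ into a single fixed colour. Injectivity makes the colouring a genuine partition, and since $\{P(\beta,C):C\}$ meets every right $H_\beta$-coset, every colour class is left $\aleph_1$-large (adding the leftover points to one class preserves this, as a superset of a covering transversal is again one). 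The levels $1\le\beta<\kk$ then give the required $\kk$ cells.

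The hard part is precisely the injective selection. Each coset $C$ is only countable while there are $\kk>\aleph_0$ requirements, so a naive greedy recursion can exhaust a coset at a limit stage: the representatives already placed inside a fixed countable coset may accumulate to all of it, even though at each earlier stage infinitely many of its points were still free. I expect the core of the proof to be a Hall/marriage argument on the tree of cosets $\{G_\beta g\}$ guaranteeing that each coset keeps an unused point when its turn arrives — concretely, choosing the representatives one per $G_{\beta'}$-subcoset so as to preserve, \emph{through} limit stages, an infinite reservoir of unused points in every coset. A second, genuinely separate obstacle appears for $\kk>\aleph_1$: there the filtration subgroups $G_\beta$ are uncountable and cannot serve as witnesses, and since a single countable subgroup supports at most $\aleph_0$ pairwise disjoint covering transversals, one must manufacture a $\kk$-sized family of countable subgroups $H_\beta$ whose coset structures are independent enough to carry $\kk$ disjoint classes. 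Organising this, most likely by splitting according to the cofinality of $\kk$, is where I anticipate the main effort.
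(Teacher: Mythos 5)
Your preparatory reductions are correct as far as they go: the countable case, the equivalence ``left $\aleph_1$-large $\Leftrightarrow$ $A$ meets every right coset of some countable subgroup,'' and the observation that the set of elements whose canonical representation uses only levels $\ge\beta$ is a transversal of the right cosets of $G_\beta$. But the proof stops exactly where the theorem begins. The two steps you defer --- the Hall/marriage-type argument that keeps every countable coset from being exhausted at limit stages of the recursion, and the manufacture of a $\kk$-sized family of countable witness subgroups when $\kk>\aleph_1$ --- are the entire content of the statement; as written they are announced, not carried out. The second obstacle is not minor bookkeeping in your framework: for $\kk>\aleph_1$, and especially for singular $\kk$, you give no indication of where the family $\{H_\beta\}$ should come from nor how to enumerate $\kk$ requirements so that initial segments stay controllable, so the plan cannot be executed as stated. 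This is a genuine gap, not a stylistic one.

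It is worth seeing how the paper's proof sidesteps both obstacles by making no recursive choices at all. Fix the filtration with $|G_1|=\aleph_0$, write each $g\neq e$ canonically as $g=g_1x_{\gamma_m}\cdots x_{\gamma_1}$ with $g_1\in G_1$ and $0<\gamma_m<\cdots<\gamma_1$, put $\Gamma(g)=\{\gamma_1,\dots,\gamma_m\}$, fix a bijection $\pi:G_1\to\NN$, and declare $g\in A_\alpha$ if and only if $\alpha\in\Gamma(g)$ and $\gamma_{\pi(g_1)}=\alpha$. Each $g$ receives at most one colour, so disjointness is automatic, and $A_\alpha$ is left $\aleph_1$-large with the single countable witness $F_\alpha=\{e,a_\alpha\}G_1$, where $a_\alpha\in X_\alpha$: left multiplication by $G_1$ changes only the ``pointer'' $g_1$ without touching $\Gamma(g)$, so if $\alpha\in\Gamma(g)$ one resets the pointer to select $\alpha$; and if $\alpha\notin\Gamma(g)$, then $a_\alpha^{-1}g$ acquires the level $\alpha$ in its canonical form, reducing to the first case. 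In particular, your worry about needing $\kk$ ``independent'' countable subgroups has a one-line answer: the witnesses $\langle G_1\cup\{a_\alpha\}\rangle$ are generated by one fixed countable subgroup plus a single element per level, with no cofinality analysis. If you want to salvage your own route, the lesson is to replace ``choose representatives injectively by transfinite recursion'' with ``let each element compute its colour from its own canonical form''; that is precisely what eliminates the limit-stage exhaustion problem.
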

\begin{proof}
If $G$ is countable, the statement is evident because each singleton is $\aleph_1$-large.
Assume that $\kk>\aleph_0$ and fix some filtration $\{G_\alpha:\alpha<\kk\}$ of $G$ such that $|G_1|=\aleph_0$.
Given any $g\in G\setminus\{e\}$, we rewrite the canonical representation $g=x_{\gamma_n}\dots x_{\gamma_1},$
in the following form $$g=g_1x_{\gamma_m}\dots x_{\gamma_1},$$ $g_1\in G_1$, $0<\gamma_m<\dots<\gamma_1$. 
Here $g_1=e$ and $m=n$ if $\gamma_n>0$, and $g_1=x_{\gamma_n}$ and $m=n-1$ if $\gamma_n=0$.
We put $\Gamma(g)=\{\gamma_1,\dots,\gamma_m\}$ and fix an arbitrary bijection $\pi:G_1\to\NN$.

We define a family $\{A_\alpha:0<\alpha<\kk\}$ of subsets of $G$ by the following rule: 
$g\in A_\alpha$ if and only if $\alpha\in\Gamma(g)$ and $\gamma_{\pi(g_1)}=\alpha$. Since the subsets 
$\{A_\alpha:0<\alpha<\kk\}$ are pairwise disjoint, it suffices to show that each $A_\alpha$ is left $\aleph_1$-large.
We take $a_\alpha\in X_\alpha$, put $F_\alpha=\{e,a_\alpha\}G_1$ and prove that $G=F_\alpha A_\alpha$.

Let $g\in G$ and $\alpha\in \Gamma(g)$. By the definition of $A_\alpha$, there exists $h\in G_1$ such that 
$hg\in A_\alpha$ so $g\in G_1A_\alpha$. If $\alpha\notin\Gamma(g)$ then $\alpha\in\Gamma(a_\alpha^{-1}g)$ so
$a_\alpha^{-1}g\in G_1A_\alpha$ and $g\in a_\alpha G_1 A_\alpha$.
\end{proof}
\begin{Qs} Can every group $G$ of cardinality $\kk$ be partitioned into $\aleph_1$ $\aleph_1$-large subsets?
$\kk$-large subsets? $\kk^+$-large subsets ?\end{Qs}
Given a group $G$ and a subset $A$ of $G$, we denote
$$cov(A)=\min\{|X|:X\subseteq G, G=XA\}.$$
The covering number $cov(A)$ evaluates a size of $A$ inside $G$ and, if $A$ is a subgroup, coincides with the index $|G:A|$.

It is easy to partition each infinite group $G=A_1\cup A_2$ so that $cov(A_1)$ and $cov(A_2)$ are infinite. 
Moreover, if $|G|$ is regular, there is a partition $G=\bigcup\limits_{\alpha<|G|}H_\alpha$ such that 
$cov(G\setminus H_\alpha)=|G|$ for each $\alpha<|G|$. 
In particular, there is a partition $G=A_1\cup A_2$ such that $cov(A_1)=cov(A_2)=|G|$. 
See Section~\ref{s8} for these statements, their generalizations and applications.

However, for every $n\in\NN$, there is a (minimal) natural number $\Phi(n)$ such that, for every group $G$ and every 
partition $G=A_1\cup...\cup A_n$, $cov(A_iA_i^{-1})\le\Phi(n)$ for some cell $A_i$ of the partition. 
For these results and corresponding open problem see the next section.

In \cite[Question F]{b12}, J. Erde asked whether, given a partition $\PP$ of an infinite group $G$ such that 
$|\PP|<|G|$, there is $A\in\PP$ such that $cov(AA^{-1})$ is finite. 
After some simple examples answering this question extremely negatively, we run into the following conjecture.

\begin{Cj} 
Every infinite group $G$ of cardinality $\varkappa$ can be partitioned $G=\bigcup\limits_{n<\w}A_n$ so that $cov(A_nA_n^{-1})=\varkappa$ for each $n\in\w$. 
\end{Cj}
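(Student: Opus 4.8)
The plan is to split on the cofinality of $\varkappa$. Suppose first that $\mathrm{cf}(\varkappa)=\w$; this covers $\varkappa=\aleph_0$ and every singular cardinal of countable cofinality. Write $\varkappa=\bigcup_{n<\w}\lambda_n$ with $\lambda_n<\varkappa$ increasing and $\sup_n\lambda_n=\varkappa$, enumerate $G=\{g_\xi:\xi<\varkappa\}$, and set $A_n=\{g_\xi:\lambda_{n-1}\le\xi<\lambda_n\}$ with $\lambda_{-1}=0$. Then $|A_n|\le\lambda_n<\varkappa$, so $|A_nA_n^{-1}|<\varkappa$. For any $S$ with $|S|<\varkappa$ and any $X$ with $|X|<\varkappa$ we have $|XS|=\max(|X|,|S|)<\varkappa$, so $XS\ne G$; hence $cov(S)=\varkappa$. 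Taking $S=A_nA_n^{-1}$ settles this case completely.

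So assume $\mathrm{cf}(\varkappa)>\w$. Now any partition of $G$ into $\w$ cells has a cell of cardinality $\varkappa$, while a cell of size $<\varkappa$ works automatically by the computation above; the real task is to produce big cells whose difference sets are still non-large. I would first reformulate the target. By the remark of Section~\ref{s2}, for $S\subseteq G$ one has $cov(S)=\varkappa$ if and only if $G\setminus S$ is left $\varkappa$-thick: indeed $G\setminus S$ is left $\varkappa$-thick iff for every $F\in[G]^{<\varkappa}$ some $a$ satisfies $Fa\cap S=\varnothing$, i.e. $F^{-1}S\ne G$, which (since $e\in A_nA_n^{-1}$ forces $cov(A_nA_n^{-1})\le\varkappa$) is exactly $cov(S)\ge\varkappa$. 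Thus it suffices to partition $G=\bigcup_{n<\w}A_n$ so that $G\setminus A_nA_n^{-1}$ is left $\varkappa$-thick for every $n$.

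For the construction I would fix a filtration $\{G_\alpha:\alpha<\varkappa\}$ with $|G_\alpha|<\varkappa$ and use the canonical representation of Section~\ref{s5}; write $\ell(g)$ for the unique $\alpha$ with $g\in G_{\alpha+1}\setminus G_\alpha$. The governing fact is that levels dominate under products: if $\ell(g)>\ell(h)$ then $h^{-1}\in G_{\ell(g)}$, whence $gh^{-1}\in G_{\ell(g)+1}\setminus G_{\ell(g)}$ and $\ell(gh^{-1})=\ell(g)$. Consequently the part of $A_nA_n^{-1}$ coming from pairs of distinct levels meets each level $\alpha$ in fewer than $\varkappa$ elements once the cell is not too large below $\alpha$. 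The danger is the same-level differences: if $\ell(g)=\ell(h)=\alpha$ then $gh^{-1}\in G_{\alpha+1}$ may drop to any lower level, and in fact $(G_{\alpha+1}\setminus G_\alpha)(G_{\alpha+1}\setminus G_\alpha)^{-1}\supseteq G_\alpha$, so a cell containing whole level-blocks cofinally often has $A_nA_n^{-1}=G$. The coloring must therefore keep each cell thin inside $G_{\alpha+1}$ at every level, while still exhausting $G$ with only $\w$ colors. Granting such a coloring, one would verify left $\varkappa$-thickness of $G\setminus A_nA_n^{-1}$ thus: given $F\in[G]^{<\varkappa}$, which for regular $\varkappa$ lies in some $G_{\gamma_0}$, seek a level $\delta>\gamma_0$ where the room $|G_{\delta+1}\setminus G_\delta|$ strictly exceeds the bad set $\bigcup_{f\in F}f^{-1}\bigl(A_nA_n^{-1}\cap(G_{\delta+1}\setminus G_\delta)\bigr)$, and take $a$ of level $\delta$ outside it; then $\ell(fa)=\delta$ for each $f\in F$, so $Fa\cap A_nA_n^{-1}=\varnothing$.

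The main obstacle is precisely this escape step together with controlling the same-level contribution. When the filtration pieces do not grow — e.g. $\varkappa=\aleph_1$ with all $G_\alpha$ countable — the room $|G_{\delta+1}\setminus G_\delta|$ and the bad set are both of size $\aleph_0$, so a crude choice of representatives need not escape; one must choose the coset representatives and the within-level color assignment recursively and diagonally so that every class stays escapable at cofinally many levels. Making these choices compatible across all $\w$ colors and all $\varkappa$ levels in a non-abelian group, where coset arithmetic is not clean, is the crux, and is where I expect the difficulty to concentrate. I would first settle the abelian case and the case of a strictly increasing filtration with per-level thinness (where the room argument goes through), and only then attack the general regular case; the singular case with $\mathrm{cf}(\varkappa)>\w$ adds the further wrinkle that an $F\in[G]^{<\varkappa}$ need not lie below any single level, so the escape would have to be organized along a fixed cofinal sequence of length $\mathrm{cf}(\varkappa)$.
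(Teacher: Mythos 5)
Your proposal does not prove the statement in its main case, and the gap is exactly where you say you expect it: the coloring is never constructed. To calibrate: the statement is a Conjecture that the paper itself confirms only partially, via Theorem~\ref{t5.2}, which produces a countable partition with $cov(A_nA_n^{-1})\ge cf(\varkappa)$ for every $G$ and hence settles the conjecture precisely for regular $\varkappa$ (the singular case of uncountable cofinality is left open, with \cite{b41} cited for Abelian groups). Your first case, $cf(\varkappa)=\w$, is correct and complete: countably many cells of size $<\varkappa$ give $|A_nA_n^{-1}|<\varkappa$ and hence $cov(A_nA_n^{-1})=\varkappa$; this even settles, trivially, singular $\varkappa$ of countable cofinality, a case Theorem~\ref{t5.2} alone does not decide. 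But for $cf(\varkappa)>\w$ --- in particular for regular uncountable $\varkappa$, the one case the paper actually proves --- you only postulate the needed coloring (``granting such a coloring''), and the verification you sketch is a counting argument (``room'' at level $\delta$ versus a ``bad set'') that, as you yourself observe, already fails for $\varkappa=\aleph_1$ with all $G_\alpha$ countable. The entire content of the theorem is the coloring you are missing.

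The paper's idea makes the escape structural rather than counting-based. Write each ordinal uniquely as $\alpha=\beta+n$ with $\beta$ limit and $n\in\w$, set $f(\alpha)=n$, and color $g$ by the finite sequence $\chi(g)=f(\gamma_{s(g)}(g))\dots f(\gamma_1(g))\in Seq(\w)$ of $f$-values of the levels in its canonical representation; the cells are $H_s=\chi^{-1}(s)$, countably many. Given $K$ with $|K|<cf(\varkappa)$, the levels occurring in $K$ are bounded, so one can choose $\gamma$ above all of them with $f(\gamma)\notin\{s_1,\dots,s_n\}$ (such $\gamma$ exist cofinally: take $\gamma=\beta+m$ with $\beta$ limit and $m$ not an entry of $s$), and pick $h\in X_\gamma$. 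Uniqueness of canonical representations then gives: every $g\in KH_s$ has its letters at levels $\ge\gamma$ inherited from the $H_s$-factor, so their $f$-values lie among the entries of $s$, and in particular $g$ has \emph{no} letter at level $\gamma$; whereas every $g'\in hH_s$ \emph{does} have a letter at level exactly $\gamma$, since writing $u'\in H_s$ as $p'q'$ with $p'\in G_\gamma$ and $q'$ the letters at levels $>\gamma$ (there are none at level $\gamma$), we get $g'=(hp')q'$ with $hp'\in G_{\gamma+1}\setminus G_\gamma$. Hence $KH_s\cap hH_s=\varnothing$, i.e. $h\notin KH_sH_s^{-1}$, and $cov(H_sH_s^{-1})\ge cf(\varkappa)$. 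No cardinality comparison at level $\gamma$ is ever made --- presence or absence of a level-$\gamma$ letter separates the two sets outright --- which is exactly what dissolves your $\aleph_1$ obstacle; and your ``same-level danger'' ($H_sH_s^{-1}\supseteq G_\alpha$) never needs to be controlled, because the argument only requires one point $h$ outside $KH_sH_s^{-1}$ for each small $K$. Note finally that even this argument yields only $cov\ge cf(\varkappa)$, so your closing remark is well placed: for singular $\varkappa$ of uncountable cofinality the conjecture is open in the paper as well.
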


Now we confirm Conjecture for every group of regular cardinality and note that it holds also for some groups 
(in particular, Abelian) of an arbitrary cardinality (see \cite{b41}).

\begin{Th}\label{t5.2}
Let $G$ be an infinite group of cardinality $\varkappa$. Then there exists a partition 
$G=\bigcup\limits_{n\in\w}A_n$ such that $cov(A_nA_n^{-1})\ge cf(\kappa)$ for each $n\in\w$.
\end{Th}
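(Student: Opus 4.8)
The plan is to build the partition by exploiting the filtration machinery developed in this section, which converts structural information about nested subgroups into a combinatorial labelling of group elements. First I would fix a filtration $\{G_\alpha:\alpha<\kk\}$ of $G$; since $cf(\kk)\le\kk$ and we want a lower bound of $cf(\kk)$ on covering numbers, the natural move is to choose the filtration so that the index $|G:G_\alpha|$ grows along a cofinal sequence, i.e. so that no single $G_\alpha$ is left $\aleph_0$-large, and more strongly so that covering $G$ by fewer than $cf(\kk)$ translates of any fixed $G_\alpha$ is impossible. For each nonidentity $g$ I would use the canonical representation $g=x_{\gamma_{s(g)}}\dots x_{\gamma_1}$ and read off the ordinal data $\Gamma(g)=\{\gamma_1,\dots\}$ exactly as in the proof of Theorem~\ref{t5.1}.

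\medskip

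The key idea is to define the cells $A_n$ not by which ordinals appear in $\Gamma(g)$, but by some $\w$-valued invariant of the representation—for instance $A_n=\{g:s(g)=n\}$ (elements whose canonical expansion has length exactly $n$), or a refinement thereof that records a bounded amount of combinatorial information. The crucial point I must verify is that for each such cell, $A_nA_n^{-1}$ fails to be covered by fewer than $cf(\kk)$ translates. The product $A_nA_n^{-1}$ consists of elements $gh^{-1}$ where $g,h$ share the length-$n$ structure; the heart of the argument will be to show that as one ranges over a cofinal family of ordinals $\alpha<\kk$, one can find elements $g_\alpha\in A_n$ whose differences $g_\alpha g_\beta^{-1}$ land in distinct cosets in a way that forces $cov(A_nA_n^{-1})\ge cf(\kk)$. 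Concretely I would pick, for each $\alpha$ in a cofinal set of size $cf(\kk)$, a generator $x_\alpha\in X_\alpha$ and witness elements in $A_n$ built using $x_\alpha$, then argue that a covering set $X$ with $G=X\,A_nA_n^{-1}$ must contain, for each $\alpha$, an element detecting the $\alpha$-th coset, yielding $|X|\ge cf(\kk)$.

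\medskip

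The main obstacle I expect is the product step: unlike largeness, which is almost immediate from the filtration labelling (as in Theorem~\ref{t5.1}), the set $A_nA_n^{-1}$ mixes the two representations and the top-level ordinals can cancel, so the ``$\gamma$-coordinate'' that cleanly separated elements of $A_\alpha$ in the previous proof is no longer an invariant of the difference. I would therefore need a quantity that survives multiplication: the right tool is probably to track the largest ordinal index occurring in either factor and show that $A_nA_n^{-1}$ still meets only boundedly many cosets of each $G_\alpha$, so that covering it requires cofinally many translates. Handling the interaction between the length bound $s(g)=n$ and this surviving ordinal invariant—ensuring the construction genuinely produces $cf(\kk)$ many incompatible translation classes rather than collapsing—is the delicate core, and I would treat the regular case (where $cf(\kk)=\kk$, matching Conjecture) as the guiding special case before extracting the general $cf(\kk)$ bound.
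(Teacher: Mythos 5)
Your skeleton (filtration, canonical representation, countably many cells indexed by a finite-sequence invariant, a witness element taken from some $X_\gamma$) is exactly the paper's skeleton, and you correctly locate the difficulty at the product step. But the two concrete devices you propose to finish with are, respectively, insufficient and false. First, the default partition by length, $A_n=\{g:s(g)=n\}$, does not support your witness scheme: take $G=\bigoplus_{\alpha<\kk}\ZZ_2$ with the standard filtration and $X_\alpha=\{e_\alpha\}$, so that $A_1=\{e_\alpha:\alpha<\kk\}$. If the covering candidate $K$ contains a single basis vector $e_\beta$, then for \emph{every} $\gamma$ the intended witness $h=e_\gamma\in X_\gamma$ is already covered, since $e_\gamma=e_\beta+(e_\beta+e_\gamma)$ with $e_\beta\in K$ and $e_\beta+e_\gamma\in A_1A_1^{-1}$; so no representative can serve as a witness for the length partition. (For abelian $G$ the length partition can be rescued by counting supports, but that argument is genuinely commutative.) Second, your proposed key lemma --- that $A_nA_n^{-1}$ meets only boundedly many cosets of each $G_\alpha$ --- is false in the same example: $A_n+A_n$ is the set of all elements of even support of size at most $2n$, and these realize $\kk$ distinct cosets of every $G_\alpha$, since any tail of size $\le 2n$ above $\alpha$ occurs.

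What the paper does instead, and what is missing from your plan, is a finer ordinal-arithmetic labelling together with a reformulation that avoids analyzing $A_nA_n^{-1}$ altogether. Write each $\alpha<\kk$ uniquely as $\alpha=\beta+n$ with $\beta$ a limit ordinal, put $f(\alpha)=n$, and partition $G\setminus\{e\}$ by the whole sequence $\chi(g)=f(\gamma_{s(g)}(g))\cdots f(\gamma_1(g))\in Seq(\w)$, with countably many cells $H_s=\chi^{-1}(s)$. The covering bound then comes from disjointness of left translates: $h\in KH_sH_s^{-1}$ if and only if $hH_s\cap KH_s\neq\varnothing$, and left multiplication interacts well with the canonical form because the peeling removes the largest ordinal first, on the right. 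Given $K$ with $|K|<cf(\kk)$, the ordinals occurring in elements of $K$ are bounded by some $\delta<\kk$ (this is the only place the cofinality hypothesis is used), so one may choose $\gamma>\delta$ with $f(\gamma)\notin\{s_1,\dots,s_n\}$ (each $f$-class is cofinal in $\kk$) and take $h\in X_\gamma$. For $k\in K\subseteq G_\gamma$ and $g'\in H_s$, the ordinals $\ge\gamma$ in the canonical form of $kg'$ are exactly those of $g'$, hence carry $f$-values from $\{s_1,\dots,s_n\}$; in particular $\gamma$ never occurs in elements of $KH_s$. By contrast, $\gamma$ occurs in the canonical form of every element of $hH_s$, because $h\cdot(\text{part of }g'\text{ below }\gamma)$ lies in $G_{\gamma+1}\setminus G_\gamma$ while the part of $g'$ above $\gamma$ is untouched on the right. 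Hence $hH_s\cap KH_s=\varnothing$ and $cov(H_sH_s^{-1})\ge cf(\kk)$. This labelling $f$ --- an invariant of the representation that refines the length and can be \emph{avoided} cofinally --- is precisely the key idea that your phrase ``a refinement recording a bounded amount of combinatorial information'' leaves unsupplied, and without it the cancellation problem you identify remains unresolved.
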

\begin{proof}
If $G$ is countable, the statement is trivial: take any partition of $G$ into finite subsets. 
We assume that $|G|>\aleph_0$ and take a filtration $\{G_\alpha:\alpha<\kk\}$ of $G$ such that $|G_\alpha|<\kk$
for each $\alpha < \kk$. For $g\in G\setminus\{e\}$, we use the canonical representation 
$$g=x_{\gamma_{s(g)}(g)}x_{\gamma_{s(g)-1}(g)}\dots x_{\gamma_2(g)}x_{\gamma_1(g)}.$$

Each ordinal $\alpha<\varkappa$ can be written uniquely as $\alpha=\beta+n$ where $\beta$ is a limit ordinal and $n\in\w$. 
We put $f(\alpha)=n$ and denote by $Seq(\w)$ the set of all finite sequences of elements of $\w$. 
Then we define a mapping $\chi:G\setminus\{e\}\to Seq(\w)$ by 
$$\chi(g)=f(\gamma_{s(g)}(g))f(\gamma_{s(g)-1}(g))...f(\gamma_2(g))f(\gamma_1(g)),$$
and, for each $s\in Seq(\w)$, put $H_s=\chi^{-1}(s)$. Since the set $Seq(\w)$ is countable, it suffices to prove that $cov(H_sH_s^{-1})\ge cf\varkappa$ for each $s\in Seq(\w)$.

We take an arbitrary $s\in Seq(\w)$ and an arbitrary $K\subseteq G$ such that $|K|< cf\varkappa$. 
Then we choose $\gamma<\varkappa$ such that $\gamma>\max g$ for each $g\in K$ and $f(\gamma)\notin\{s_1,...,s_n\}$. 
We pick $h\in X_\gamma$ and show that $KH_s\cap hH_s=\varnothing$ so $h\notin KH_sH_s^{-1}$ and $cov(H_sH_s^{-1})\ge cf\kk$.

If $g\in KH_s$ and $\gamma_i(g)\ge\gamma$ then $f(\gamma_i(g))\in\{s_1,...,s_n\}$. If $g'\in hH_s$ then, by the 
choice of $h$, there exists $j$ such that $\gamma_j(g')=\gamma$ and $f(\gamma_j(g'))\notin\{s_1,\dots, s_n\}$.
Hence $\chi(g)\neq\chi(g')$ and $KH_s\cap hH_s=\varnothing$.
\end{proof}

The first usage of filtration was in \cite{b28} to partition every infinite group $G$ into $\aleph_0$ small subsets.
Let $D_n=\{g\in G:s(g)=n\}$. Then each subset $D_n$ is left small so $D_n\cap D_m^{-1}$ is small for all $n,m\in\NN$.
Thus $\{e\}\cup\bigcup_{n,m}D_n\cap D_m^{-1}$ is a partition of $G$ into small subsets.
\begin{Th}\label{t5.3}
Every infinite group $G$ can be partitioned into $\aleph_0$ scattered subsets.
\end{Th}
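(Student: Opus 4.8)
The plan is to reduce to the uncountable case and reuse the countable partition built in the proof of Theorem~\ref{t5.2}. If $G$ is countable I would simply split it into $\aleph_0$ singletons: a singleton has no unbounded subset, so it is scattered vacuously. So assume $|G|=\kk>\aleph_0$, fix a filtration $\{G_\alpha:\alpha<\kk\}$ with $|G_\alpha|<\kk$, and use the canonical representation together with the residue map $\chi(g)=f(\gamma_{s(g)}(g))\dots f(\gamma_1(g))$ from the proof of Theorem~\ref{t5.2}. This yields the partition $G=\{e\}\cup\bigcup_{s\in Seq(\w)}H_s$ into $\aleph_0$ cells, and it suffices to prove that each $H_s$ is scattered.

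The engine is the following concatenation lemma, read off from the uniqueness of the canonical representation: if $g=uv$ where $u\in G_{\beta+1}$ and every index of $v$ exceeds $\beta$, then the canonical words of $u$ and $v$ simply concatenate, so $s(g)=s(u)+s(v)$ and $\chi(g)=\chi(u)\,\chi(v)$. In particular, if $h\ne e$ and $\gamma_1(h)$ is smaller than every index of $g$, then $s(hg)=s(h)+s(g)>s(g)$. I would then verify scatteredness of $H_s$ through the characterization of scattered subsets above, either via the form ``contains no piecewise shifted $FP$-set'' or via the ultrafilter form ``every infinite $Y\subseteq H_s$ admits $p\in Y^*$ with $\vt_p(Y)$ finite''.

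Suppose $H_s$ contained a piecewise shifted $FP$-set determined by an injective sequence $(g_m)$ and shifts $(b_m)$, so that $c_m:=g_mb_m\in H_s$ and $g_{i_1}\cdots g_{i_{n-1}}c_m\in H_s$ whenever $i_1<\cdots<i_{n-1}<m$. Passing to a subsequence I may assume $\gamma_1(g_m)$ is strictly increasing. If the smallest indices $\gamma_{s(c_m)}(c_m)$ are unbounded, then for large $m$ the element $g_0$ has $\gamma_1(g_0)$ below every index of $c_m$, and the concatenation lemma gives $s(g_0c_m)>s(c_m)=|s|$, so $g_0c_m\notin H_s$ -- contradicting that $g_0c_m$ is an $FP$-set element. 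In the remaining case all $c_m$ have smallest index below a fixed $\beta<\kk$; splitting $c_m=c_m^{\flat}c_m^{\sharp}$ at level $\beta$ (with $c_m^{\flat}\in G_{\beta+1}\setminus\{e\}$) and passing to a further subsequence on which $s(c_m^{\flat})$ is constant, the concatenation lemma turns each relation ``$g_jc_m\in H_s$'' into a statement about translates of $c_m^{\flat}$ remaining in a fixed residue class inside the smaller group $G_{\beta+1}$. Since $|G_{\beta+1}|<\kk$, I would close this case by induction on $\kk$, the inductive hypothesis forbidding such a persistent infinite family.

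The main obstacle is exactly this bounded case. Because the subgroups $G_\alpha$ need not be normal, left multiplication by a low element can rewrite the low-level coset representatives of $c_m$, so one cannot argue by length alone; here the residue coloring $\chi$ is indispensable, as it records enough of the low structure to detect that membership in $H_s$ is destroyed. Making the reduction to $G_{\beta+1}$ precise -- aligning the induction with the shifts $b_m$, which need not lie in $G_{\beta+1}$, and ensuring that the relative $FP$-structure descends to the subgroup -- is the delicate heart of the proof; the unbounded case, by contrast, is immediate from the concatenation lemma.
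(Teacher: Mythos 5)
Your overall strategy (filtration, a coloring with countably many colors, verification via the $FP$-set characterization) is the right shape, but the specific coloring you chose cannot work: the residue coloring $\chi(g)=f(\gamma_{s(g)}(g))\dots f(\gamma_1(g))$ of Theorem~\ref{t5.2} records only ordinal data about the indices and nothing about the group structure inside the layers $G_{\alpha+1}\setminus G_\alpha$, and its cells need not be scattered. Concretely, nothing in the definition of a filtration prevents $G_1$ from being countably infinite (the proof of Theorem~\ref{t5.1} even chooses $|G_1|=\aleph_0$). Every $g\in G_1\setminus\{e\}$ has $s(g)=1$ and $\gamma_1(g)=0$, hence lies in the single cell $H_{(0)}$; since every infinite group contains an $FP$-set avoiding $e$ (choose $g_m$ recursively outside the finite set $\{w^{-1}:w\in FP(g_0,\dots,g_{m-1})\}\cup\{e,g_0,\dots,g_{m-1}\}$), the cell $H_{(0)}$ contains an $FP$-set, i.e.\ a piecewise shifted $FP$-set with all $b_n=e$, and so is not scattered by Theorem 3.4. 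The failure persists even if you insist that $G_1$ be finite: at a limit level $\gamma$, all of $X_\gamma$ lands in $H_{(0)}$, and $X_\gamma$ is an arbitrary system of coset representatives; if, say, $G_{\gamma+1}=G_\gamma\times\ZZ$ and one takes $X_\gamma=\{e\}\times(\ZZ\setminus\{0\})$, then $X_\gamma$ contains the $FP$-set of the sequence $(e,2^n)_{n\in\w}$. So for legitimate (indeed natural) choices of filtration and representatives your cells $H_s$ are simply not scattered, and your proposal makes no attempt to control these choices.

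This is also exactly why your ``bounded case'' cannot be closed by induction on $\kk$. The inductive hypothesis only asserts the \emph{existence} of some partition of $G_{\beta+1}$ into $\aleph_0$ scattered subsets; it says nothing about the particular sets $H_s\cap G_{\beta+1}$, which are cells of the residue coloring of $G_{\beta+1}$ with respect to the restricted filtration and, as shown above, can fail to be scattered. There is thus no statement available for the inductive hypothesis to ``forbid the persistent infinite family.'' The paper repairs this defect in the only place it can be repaired, namely inside the construction of the coloring: for each $\alpha$ one uses the inductive hypothesis to fix a map $\chi_\alpha:G_{\alpha+1}\setminus G_\alpha\to\NN$ whose fibers are scattered \emph{in} $G_{\alpha+1}$, and then colors $g=x_{\alpha_n}\dots x_{\alpha_1}$ by the full sequence of colors of its partial products, $\chi(g)=(\chi_{\alpha_n}(x_{\alpha_n}),\chi_{\alpha_{n-1}}(x_{\alpha_n}x_{\alpha_{n-1}}),\dots,\chi_{\alpha_1}(x_{\alpha_n}\dots x_{\alpha_1}))$. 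In this way the trace of each color class on every layer is scattered by construction, which is precisely the datum your argument lacks; your concatenation lemma (which is correct, and implicitly used in the paper's verification) then takes care of the case of unbounded indices, but it cannot substitute for the missing inductive structure in the bounded case.
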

{\em Sketch of proof.} We use induction by the cardinality of $G$.
If $G$ is countable, the statement is evident because each singleton is scattered.
Assume that we have proved the theorem for all groups of cardinality $<\kk$ ($\kk>\aleph_0$) and take an  arbitrary
group of cardinality $\kk$. 
We fix a filtration $\{G_\alpha:\alpha<\kk\}$ of $G$ such that $|G_\alpha|<\kk$ for each $\alpha<\kk$.

For every $\alpha<\kk$, we use the inductive hypothesis to define a mapping $\chi_\alpha:G_{\alpha+1}\setminus 
G_{\alpha}\to\NN$ such that $\chi_\alpha^{-1}(i)$ is scattered in $G_{\alpha+1}$ for every $i\in\NN$.
Then we take $g\in G\setminus\{e\}$, $g=x_{\alpha_n}\dots x_{\alpha_1}$ and put 
$$\chi(g)=(\chi_{\alpha_n}(x_{\alpha_n}),\chi_{\alpha_{n-1}}(x_{\alpha_n}x_{\alpha_{n-1}}),\dots,\chi_{\alpha_1}(x_{\alpha_n}\dots x_{\alpha_1})).$$
Thus, we have defined a mapping $\chi:G\setminus\{e\}\to\bigcup_{n\in\NN}\NN^n$.
For verification that each subset $\chi^{-1}(m)$, $m\in \bigcup_{n\in\NN}\NN^n$ is scattered see \cite{b5}.

{\em Comments.} In the preprint form, Theorems~\ref{t5.1},~\ref{t5.2} and~\ref{t5.3} 
appeared in \cite{b40}, \cite{b41}, \cite{b5} respectively.

\section{Submeeasures}\label{s6}

Let $X$ be a set. A function $\mu:\PP_X\to[0,1]$ is called
\begin{itemize}
\item{} {\it density} if $\mu(\varnothing)=0$, $\mu(X)=1$ and $A\subseteq B$ implies $\mu(A)\le\mu(B)$;
\item{} {\it submeasure} if $\mu$ is density and $\mu(A\cup B)\le\mu(A)+\mu(B)$ for any subsets $A,B$ of $G$;
\item{} {\it measure} if $\mu$ is density and $\mu(A\cup B)=\mu(A)+\mu(B)$ for any disjoint subsets $A,B$ of $G$.
\end{itemize}

For our goals, the basic papers are \cite{b18}, \cite{b46}, \cite{b49}. 
In  \cite{b46} Solecki introduced three densities on a group $G$ by the formulas:
$$\sigma^L(A)=\inf_{F\in[G]^{<\w}}\sup_{x\in G}\frac{|F\cap xA|}{|F|}\text{, }\sigma^R(A)=\inf_{F\in[G]^{<\w}}\sup_{x\in G}\frac{|F\cap Ax|}{|F|}$$
$$\sigma(A)=\inf_{F\in[G]^{<\w}}\sup_{x,y\in G}\frac{|F\cap xAy|}{|F|}$$ 
and proved that $\sigma$ is a submeasure for every group $G$ and $\sigma^L$, $\sigma^R$ are submeasures provided that $G$ is amenable.

Recently \cite{b1} Banach modernized the Solecki densities to a couple of so-called extremal densities and applied 
them to some combinatorial problems.

{\em Given any finite partition of a group $G=A_1\cup\dots\cup A_n$, do there exists a cell $A_i$ and a subset $F$
of $G$ such that $G=FA_iA_i^{-1}$ and $|F|\le n$? This is so if $G$ is amenable or $n=2$.}

This is Problem $13.44$ from the Kourovka notebook \cite{b23} posed by the first author in $1995$.
For nowday state of this open problem see the survey \cite{b6}. Here we formulate some results from \cite{b6}.

{\em For every group $G$ and any partition $G=A_1\cup\dots\cup A_n$, there exists cells $A_i$, $A_j$ such that
\begin{itemize}
\item[(1)] $G=FA_jA_j^{-1}$ for some set $F\subseteq G$ of cardinality $|F|\le\max_{0<k<n}\sum\limits_{p=0}^{n-k}k^p\le n!$;
\item[(2)] $G=F(xA_iA_i^{-1}x^{-1})$ for some finite sets $F,E$ of $G$, $|F|\le n$;
\item[(3)] the set $(A_iA_i^{-1})^k$ is a subgroup of index $\le n$ in $G$;
\item[(4)] $G=FA_iA_i^{-1}A_i$ for some subset $F$ of cardinality $\le n$.
\end{itemize}}

The statements $(2)$, $(3)$ were proved with usage of the extremal densities.
We note that $(2)$ solves Problem $13.44$ provided that each cell of the partition is inner-invariant.

The family of left small subset of a group $G$ is an ideal in $\PP_G$.
Hence, for every finite partition $G$, at least one cell $A$ of the partition is not left small, so $A$ is left prethick.

{\em Given an infinite group $G$, does there exist a natural number $k=k(G)$ such that, for any partition 
$G=A_1\cup A_2$, there exists $i$ and $F\in[G]^{<\aleph_0}$ such that $FA_i$ is left thick and $|F|\le k(G)$?}

This question was asked in \cite{b36} and answered negatively of all Abelian groups, all countable locally finite
groups and all countably residually finite group.

For every countable group, the negative answer were obtained in \cite{b7} with help of syndedic submeasures.

A left invariant submeasure $\mu$ on $G$ is called {\em syndedic} if, for every subset $A\subset G$ with $\mu(A)<1$
and each $\ee>\frac{1}{|G|}$, there is a left large subset $L\subseteq G\setminus A$ such that $\mu(L)<\ee$.
The existence of syndedic submeasure on each countable group follows from \cite{b47}.
We don't know if a syndedic submeasure exists on each uncountable group.

\section{Three Sets Lemma}\label{s7}

Let $X$ be a set, $f:X\to X$. We define a directed graph $\Gamma_f$ with the set of vertices $X$ and the set of edges
$\{(x,y):y=f(x)\}$. Clearly, each vertex $x$ of $\Gamma_f$ is incident to only one edge $(x,y)$ and every connected
component of $\Gamma_f$ has only one directed cycle. 
We take four colors $1,2,3,4$ and color all fixed points of $X$ in $1$.
Then all other points can be easily colored in $2,3,4$ so that all incident points $x,y$ are of distinct colors.
If $\Gamma_f$ has no odd cycles of length $>1$, it suffices to use only three colors.
Thus, we have got the following statements
\begin{itemize}
\item[(1)] {\em A set $X$ can be partitioned $X=X_0\cup X_1\cup X_2\cup X_3$ so that $X_0=\{x\in X:f(x)=x\}$ and 
$f(X_i)\cap X_i=\varnothing$, $i\in\{1,2,3\}$.}
\item[(2)] {\em If $\Gamma_f$ has no odd cycles of length $>1$, then $X$ can be partitioned $X=X_0\cup X_1\cup X_2$ 
so that $X_0=\{x\in X:f(x)=x\}$ and $f(X_i)\cap X_i=\varnothing$, $i\in\{1,2\}$.}
\end{itemize}

If $f$ has no fixed points, the statement $(1)$ is known as $3$-sets lemma.
For its application to ultrafilters see \cite{b14}, \cite{b17}.
We consider only one easy application of this lemma to our subject, for others more delicate see \cite{b48}.

{\em Every group $G$ can be partitioned $G=X_1\cup X_2\cup X_3$ so that each subset $X_i$ is not left thick.
If $G$ has an element of even or infinite order then there is such a partition in two subsets.}

We take an arbitrary element $g\in G\setminus\{e\}$, consider a mapping $f:G\to G$ defined by $f(x)=gx$ and apply $(1)$.
In the second case we take $g$ of infinite or even order.
In both cases we have $\{e,g\}x\nsubseteq X_i$ for each $x\in X_i$.
Since $g\notin X_iX_i^{-1}$, we have $G\neq X_iX_i^{-1}$. On the other hand \cite{b16}, if each element of $G$ is of
odd order then, for every partition $G=A\cup B$, either $G=AA^{-1}$ or $G=BB^{-1}$.

\section{Partitions into thick subsets}\label{s8}
\begin{Th} \label{t8.1} 
Let $G$ be an infinite group of cardinality $\kk$, $\lambda$ be an infinite cardinal, $\lambda\le\kk$. 
Then $G$ can be partitioned into $\kk$ $\lambda$-thick subsets provided that either $\lambda=\kk$ and $\kk$ is regular or $\lambda<\kk$.
\end{Th}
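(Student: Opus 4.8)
The plan is to recast $\lambda$-thickness as a statement about cosets of the members of a filtration, and then to manufacture the partition by a transfinite recursion that plants whole cosets into the cells. The countable case is trivial (singletons are $\lambda$-thick), so assume $\kk>\aleph_0$ and fix a filtration $\{G_\alpha:\alpha<\kk\}$ with $|G_\alpha|<\kk$. The reduction rests on a simple monotonicity: if $F\subseteq G_\beta$ then $Fb\subseteq G_\beta b$ and $bF\subseteq bG_\beta$. Hence a cell $A$ is left $\lambda$-thick as soon as it contains a whole right coset $G_\beta b$ for every $\beta<\kk$, and right $\lambda$-thick as soon as it contains a whole left coset $b'G_\beta$ for every $\beta<\kk$ — \emph{provided every $F\in[G]^{<\lambda}$ is bounded}, i.e.\ lies in some $G_\beta$. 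This boundedness holds precisely when $cf(\kk)\ge\lambda$, in particular in Case 1 (where $\lambda=\kk$ is regular, so $cf(\kk)=\kk=\lambda$) and in the part of Case 2 with $cf(\kk)\ge\lambda$.

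Under boundedness I would construct $G=\bigsqcup_{\xi<\kk}A_\xi$ by recursion. Enumerate as $\kk$ requirements the instructions ``put a fresh right coset of $G_\beta$ into $A_\xi$'' and ``put a fresh left coset of $G_\beta$ into $A_\xi$'' over all $\xi,\beta<\kk$. When $\kk$ is regular the set $C$ of already-committed elements has size $<\kk$ at every stage (a $<\kk$-union of sets of size $<\kk$), and since $G_\beta$ has $\kk$ cosets each of size $|G_\beta|<\kk$, at most $|C|<\kk$ of them meet $C$; so a fresh coset of the required side is always available and is assigned entirely to $A_\xi$. Distributing the leftover elements arbitrarily at the end preserves thickness, since both left and right $\lambda$-thickness are upward monotone. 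Each $A_\xi$ then contains a right and a left coset of every $G_\beta$, hence is $\lambda$-thick, which settles Case 1. For singular $\kk$ with $cf(\kk)\ge\lambda$ the same idea works, but the recursion must be run level by level — using that a coset of $G_\beta$ is a union of cosets of the earlier $G_{\beta'}$ — so that the commitments never exhaust all $\kk$ cosets at any single level; this bookkeeping is routine but needs care.

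The main obstacle is the last subcase of Case 2: $\kk$ singular with $cf(\kk)<\lambda<\kk$. Now $[G]^{<\lambda}$ contains \emph{unbounded} sets $F$, of size $\ge cf(\kk)$ and cofinal in the filtration, which lie in no single $G_\beta$; coset blocks no longer suffice, and genuine blocks $Fb$ and $bF$ must be inserted. The real difficulty is cardinality: a cofinal subfamily of $([G]^{<\lambda},\subseteq)$ can have size larger than $\kk$ (for instance $cf([\aleph_\w]^{\aleph_0})>\aleph_\w$), so one cannot plant pairwise disjoint blocks for all such $F$ inside a set of size $\kk$ without overflowing $G$.

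To overcome this I would pass to a filtration of length $cf(\kk)$ whose levels have regular cardinalities chosen in $[\lambda,\kk)$, apply the bounded construction inside each level to split it into $\kk$ thick pieces, and then amalgamate the level-wise partitions by a diagonal product designed to trap each cofinal $F$. The point is that the blocks serving different $F$'s \emph{within one cell} may overlap freely, so the total mass need not exceed $\kk$; one only needs the blocks of \emph{distinct} cells to stay disjoint. Arranging this overlap coherently — so that the cells remain pairwise disjoint, jointly exhaust $G$, and each still captures a block $Fb$ and $bF$ for every unbounded $F$ — is the crux of the argument, and is where I expect essentially all of the work to concentrate.
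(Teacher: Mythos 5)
Your treatment of the only case the paper itself proves --- $\lambda=\kk$ with $\kk$ regular --- is correct and is essentially the paper's own construction. The paper writes $G$ as an increasing chain $\{H_\alpha:\alpha<\kk\}$ of sets of size $<\kk$, greedily chooses $x_\alpha$ so that the two-sided blocks $H_\alpha x_\alpha H_\alpha$ are pairwise disjoint (these play the role of your separate families of left and right cosets), and distributes the blocks over $\kk$ cells indexed by a partition of $\kk$ into cofinal sets; regularity enters exactly where you invoke it, to keep the committed set of size $<\kk$ and to capture each $Y\in[G]^{<\kk}$ inside some $H_\alpha$. Your remark that the leftover elements can be distributed arbitrarily because thickness is upward monotone makes explicit a step the paper leaves tacit. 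Your level-by-level variant for singular $\kk$ with $cf(\kk)\ge\lambda$ can indeed be carried out: inside each layer $G_{j+1}\setminus G_j$ one can plant pairwise disjoint left and right cosets of $G_j$ for every cell whose index is below $|G_{j+1}|$, so each cell is served at a tail of layers, and disjointness across layers is automatic; so that subcase is fine modulo routine detail.

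The genuine gap is the subcase you flag yourself: $\kk$ singular and $cf(\kk)<\lambda<\kk$. There the proposal offers a plan (``amalgamate the level-wise partitions by a diagonal product'') but no argument, and you state explicitly that the crux is unresolved. Concretely, what is missing is the coordination step: for an unbounded $F\in[G]^{<\lambda}$, writing $F=\bigcup_i F_i$ along the layers, a single translating element $b$ must place every piece $F_ib$ into the same cell simultaneously; since right multiplication by $b$ keeps each piece $F_i$ inside its own layer whenever that layer lies above the layer of $b$, the partitions of the separate layers must be correlated with one another before the cells are even defined, and nothing in the proposal produces such a correlation (your correct observation that blocks within one cell may overlap does not by itself yield it). For fairness you should know that the printed proof does not close this gap either: it begins ``we consider only the first case,'' and the case $\lambda<\kk$ is deferred to the preprint \cite{b42}. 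So your proposal matches, and in outline slightly exceeds, what the paper actually proves, but it is not a complete proof of Theorem~\ref{t8.1} as stated.
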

\begin{proof}
We consider only the first case: $\lambda=\kk$ and $\kk$ is regular.
We write $G$ as union of an increasing chain of subsets $\{H_\alpha:\alpha<\kk\}$, $|H_\alpha|<\kk$, $H_0=\{e\}$.
Then we choose inductively a $\kk$-sequence $(x_\alpha)_{\alpha<\kk}$ in $G$ such that the subsets 
$\{H_\alpha x_\alpha H_\alpha:\alpha<\kk\}$ are pairwise disjoint.
We partition $\kk=\bigcup_{\gamma<\kk}K_\gamma$ into $\kk$ cofinal subsets and put 
$A_\gamma=\bigcup\{H_\alpha x_\alpha H_\alpha:\alpha\in K_\gamma\}$. It suffices to show that each $A_\gamma$ is 
$\kk$-thick. We take an arbitrary $Y\in[G]^{<\kk}$. Since $\kk$ is regular, there is $\alpha\in K_\gamma$ such that 
$Y\subseteq H_\alpha$. Then $Yx_\alpha\subseteq A_\gamma$ and $x_\alpha Y\subseteq A_\gamma$.
\end{proof}

Surprisingly, the singular case is "cardinally" different.
\begin{Th}\label{t8.2} For every group $G$ of singular cardinality $\kk$, the following statements hold
\begin{itemize}
\item[{(i)}] if a subset $A$ of $G$ is left $\kk$-thick then $A$ is right $\kk$-large;
\item[{(ii)}] $G$ cannot be partitioned into two $\kk$-thick subsets;
\item[{(iii)}] if $G$ is Abelian then, for every finite partition $G=A_1\cup\dots\cup A_n$, at least one cell $A_i$ is $\kk$-large.
\end{itemize}
\end{Th}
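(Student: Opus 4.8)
The plan is to prove part (i) first, exploiting singularity directly, and then to derive (ii) and (iii) from (i) together with two elementary duality facts that hold for arbitrary $\kk$.

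For (i) I would use that a group of singular cardinality can be covered by few small pieces. Writing $\kk=\sup_{i<cf(\kk)}\lambda_i$ with each $\lambda_i<\kk$, fix a bijection $G\to\kk$ and let $G_i$ correspond to the ordinals below $\lambda_i$, so that $G=\bigcup_{i<cf(\kk)}G_i$ with $|G_i|<\kk$. Since $A$ is left $\kk$-thick and each $G_i\in[G]^{<\kk}$, for every $i<cf(\kk)$ I can choose $a_i\in A$ with $G_ia_i\subseteq A$. Now set $F=\{a_i^{-1}:i<cf(\kk)\}$. Because $\kk$ is singular, $|F|\le cf(\kk)<\kk$, so $F\in[G]^{<\kk}$. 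Given any $g\in G$, pick $i$ with $g\in G_i$; then $ga_i\in G_ia_i\subseteq A$, whence $g=(ga_i)a_i^{-1}\in Aa_i^{-1}\subseteq AF$. Thus $G=AF$ and $A$ is right $\kk$-large. The only place where singularity enters — and it is the crux of the whole theorem — is the inequality $|F|\le cf(\kk)<\kk$; for regular $\kk$ the argument breaks down, consistently with Theorem~\ref{t8.1}.

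Two general facts, valid for every $\kk$, carry the rest. First, inverting everything in (i) yields its mirror: if $A$ is right $\kk$-thick then $A$ is left $\kk$-large. Hence any $\kk$-thick subset of a singular-cardinality group is both left and right $\kk$-large, i.e. $\kk$-large. Second, unwinding the definitions gives the complement relation: $A$ is left $\kk$-thick if and only if $G\setminus A$ is not left $\kk$-large (a set $F$ witnessing $G=F(G\setminus A)$ corresponds via $F\mapsto F^{-1}$ exactly to the failure of a thickness witness for $A$). For (ii), suppose $G=A\cup B$ is a partition into two $\kk$-thick subsets. Since $A$ is left $\kk$-thick, the complement relation makes $B$ not left $\kk$-large, hence not $\kk$-large; but $B$ is $\kk$-thick, so $B$ is $\kk$-large by the first fact — a contradiction.

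For (iii) I would first record the standard observation that the left $\kk$-small subsets form an ideal in $\PP_G$: downward closure is immediate, and if $A,B$ are left $\kk$-small then for every left $\kk$-large $L$ the set $L\setminus(A\cup B)=(L\setminus A)\setminus B$ is left $\kk$-large, while $G$ itself is not left $\kk$-small. Consequently, in a finite partition $G=A_1\cup\dots\cup A_n$ some cell $A_i$ is not left $\kk$-small, i.e. it is left $\kk$-prethick, so there is $F\in[G]^{<\kk}$ with $FA_i$ left $\kk$-thick. Applying (i) to $FA_i$ makes it right $\kk$-large, and since $G$ is Abelian left and right largeness coincide; thus $FA_i$ is $\kk$-large, say $G=(EF)A_i$ with $E\in[G]^{<\kk}$, and as $|EF|<\kk$ the cell $A_i$ is itself $\kk$-large. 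The only genuine obstacle is (i): once thickness is shown to imply largeness in the singular case, parts (ii) and (iii) reduce to bookkeeping with the thick/large duality and with the fact that prethickness then upgrades to largeness.
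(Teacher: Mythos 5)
Your proposal is correct, and for parts (i) and (ii) it is essentially the paper's own argument: the paper likewise writes $G=\bigcup\{H_\alpha:\alpha<cf\,\kk\}$ with each $|H_\alpha|<\kk$, picks $g_\alpha\in A$ with $H_\alpha g_\alpha\subseteq A$, and takes $F=\{g_\alpha^{-1}:\alpha<cf\,\kk\}$, singularity entering only through $cf\,\kk<\kk$; its proof of (ii) is the same combination of (i) with the thick/large complement duality, merely phrased as ``$B$ is not right $\kk$-thick'' instead of as a contradiction. Where you genuinely diverge is (iii). The paper proceeds by induction on the number of cells: if $A_1$ is not large, then $B=A_2\cup\dots\cup A_{n+1}$ is thick, hence large by (i), so $G=FB=FA_2\cup\dots\cup FA_{n+1}$, and the inductive hypothesis applied to this cover (largeness being monotone) finishes the proof. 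You instead use the ideal structure of the left $\kk$-small sets to find a cell that is not left $\kk$-small, identify it as left $\kk$-prethick, and then upgrade the thick set $FA_i$ to a large one via (i) and commutativity. Both routes are sound. Yours is non-inductive and cleanly separates a general principle (in any group, some cell of any finite partition is left prethick --- exactly the ideal argument the paper itself sketches in Section~\ref{s6} for $\kk=\aleph_0$) from the single point where singularity matters; the price is that it leans on the equivalence ``left $\kk$-small iff not left $\kk$-prethick,'' which the paper records in Section~\ref{s2} without proof, so your argument is complete only modulo that standard fact. The paper's induction needs only the simpler complement duality ($A$ not left large iff $G\setminus A$ left thick) and is self-contained in that respect.
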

\begin{proof} $(i)$ We write $G$ as a union $G=\bigcup\{H_\alpha:\alpha<cf\kk\}$ of subsets from $[G]^{<\kk}$.
For each $\alpha<cf\kk$, we pick $g_\alpha\in A$ such that $H_\alpha g_\alpha\subseteq A$ so $H_\alpha\subseteq Ag^{-1}_\alpha$.
We put $F=\{g_\alpha^{-1}:\alpha<cf\kk\}$. Then $F\in[G]^{<\kk}$ and $G=\bigcup\{H_\alpha:\alpha< cf\kk\}\subseteq AF$.
Hence, $A$ is right $\kk$-large.

$(ii)$ Suppose that $G$ is partitioned $G=A\cup B$ such that $A$ is $\kk$-thick.
Then $A$ is left $\kk$-thick and, by $(i)$ $A$ is right $\kk$-large. 
Hence, $G\setminus A$ is not right $\kk$-thick and $B$ is not $\kk$-thick.

$(iii)$ We proceed by induction. For $n=1$ the statement is evident. Let $G=A_1\cup\dots\cup A_{n+1}$.
We put $B=A_2\cup\dots\cup A_{n+1}$. If $A_1$ is not large then $B$ is thick.
By $(i)$, $B$ is large so $G=FB$ for some $B\in[G]^{<\kk}$. 
Since $G=FA_2\cup\dots\cup FA_{n+1}$, by the inductive hypothesis, there exists $i\in\{2\dots n+1\}$ such that $FA_i$ is large.
Hence $A_i$ is large.
\end{proof}

By Theorem~\ref{t8.1}, every infinite group $G$ of regular cardinality $\kk$ can be partitioned $G=B_1\cup B_2$ so that each subset $B_i$ is not left $\kk$-large.
In \cite[Prolem 13.45]{b23}, the first author asked if the same is true for every group $G$ of singular cardinlity $\kk$.
Theorem~\ref{t8.2}$(iii)$ gives a negative answer for every Abelian group of singular cardinality $\kk$.
On the other hand, this is so for every free group.
\begin{Th}\label{t8.3} 
Every free group $F_A$ in the alphabet $A$ can be partitioned $F_A= B_1\cup B_2$ so that each cell $B_i$ is not left $\kk$-large. 
\end{Th}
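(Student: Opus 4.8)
The plan is to read a single generator off the tail of each reduced word and push a partition of the alphabet $A$ forward to a partition of $F_A$. First I would dispose of the trivial range of cardinalities: if $A$ is finite then $F_A$ is countable, so $\kk=|F_A|=\aleph_0$ is regular and the desired partition is already supplied by the remark following Theorem~\ref{t8.2}. Thus I may assume $A$ is infinite, in which case $\kk=|A|$; this is exactly the case (singular $\kk$) where Theorem~\ref{t8.2}$(iii)$ obstructs the abelian analogue, so it is the substantive one. For $w\in F_A\setminus\{e\}$ with reduced form $w=b_1b_2\cdots b_n$, each $b_i\in A\cup A^{-1}$, I set $\lambda(w)\in A$ to be the generator with $b_n\in\{\lambda(w),\lambda(w)^{-1}\}$, and I write $\mathrm{supp}(w)\subseteq A$ for the finite set of generators occurring in $w$.

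Next I would split $A=A_1\sqcup A_2$ with $|A_1|=|A_2|=\kk$ (possible since $\kk$ is infinite) and define
$$B_1=\{e\}\cup\lambda^{-1}(A_1),\qquad B_2=\lambda^{-1}(A_2).$$
This is a partition of $F_A$, and by the symmetry of the construction it suffices to treat $B_1$; the argument for $B_2$ is identical. The one computation I rely on is the stability of $\lambda$ under a left translation whose support avoids the relevant generator: if $a_0\in A$ with $a_0\notin\mathrm{supp}(f)$, then the last letter of $f^{-1}$ has base different from $a_0$, so no cancellation occurs in the product $f^{-1}a_0$; hence $f^{-1}a_0$ is reduced, $\lambda(f^{-1}a_0)=a_0$, and $f^{-1}a_0\neq e$. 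This is the step where freeness does the work, since in $F_A$ cancellation is confined to the junction of the two factors and cannot reach a tail letter built from a fresh generator.

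Then I would argue by contradiction. Suppose $F_A=FB_1$ for some $F\in[F_A]^{<\kk}$; recall $a\in FB_1$ means $f^{-1}a\in B_1$ for some $f\in F$. The set $S=\bigcup_{f\in F}\mathrm{supp}(f)$ has cardinality at most $|F|\cdot\aleph_0<\kk$, so, as $|A_2|=\kk$, I may pick $a_0\in A_2\setminus S$. By the stability step, for every $f\in F$ the element $f^{-1}a_0$ is reduced with $\lambda(f^{-1}a_0)=a_0\in A_2$ and $f^{-1}a_0\neq e$, whence $f^{-1}a_0\in B_2$ and so $f^{-1}a_0\notin B_1$. Therefore $a_0\notin FB_1=F_A$, a contradiction, and $B_1$ is not left $\kk$-large.

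I expect the only genuinely delicate points to be the opening reduction (so that $\kk=|A|$ and a fresh generator is always available) together with the cardinal arithmetic $|S|\le|F|\cdot\aleph_0<\kk$, which must be checked uniformly for singular $\kk$. The conceptual core — that tail information survives translation — is not an obstacle here precisely because of the rigidity of reduction in a free group, the feature whose absence makes the statement fail for amenable groups in Theorem~\ref{t8.2}$(iii)$.
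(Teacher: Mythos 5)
Your proof is correct and takes essentially the same route as the paper's: partition the alphabet into two halves of full cardinality, classify reduced words by the base of their last letter, and defeat any candidate covering set $F$ with $|F|<\kk$ by picking a fresh generator from the opposite half occurring in no element of $F$, so that no cancellation can disturb the tail letter. The only cosmetic differences are that the paper reduces to $|A|>\aleph_0$ via Theorem~\ref{t8.1} (your argument also handles countably infinite $A$ directly) and writes $\rho$ for the last-letter map you call $\lambda$.
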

\begin{proof} In view of Theorem~\ref{t8.1}, we may suppose that $|A|>\aleph_0$, so $|A|=|F_A|$. We partition $A=A_1\cup A_2$ so that $|A|=|A_1|=|A_2|$, and put
$$B_1=\{g\in F_A:\rho(g)\in A_1\cup A_1^{-1}\},\text{ } B_2=F_A\setminus B_1.$$
Assume that $F_A=HB_1$ for some $H\in[F_A]^{<\kk}$, where $\kk=|F_A|$.
Then we choose $c\in A_2$ such that $c$ and $c^{-1}$ do not occur in any $h\in H$.
Clearly, $c\notin HB_1$ and $B_1$ is not left $\kk$-large. Analogously, $B_2$ is not left $\kk$-large.
\end{proof}

For applications of above results, we need some definitions.

A topological space $X$ with no isolated points is called {\em maximal} if $X$ has an isolated point in any stronger topology.
We note that $X$ is maximal if and only if, for every point $x\in X$, there is only one free ultrafilter converging to $x$.

A topology $\tau$ on a group $G$ is called {\em left invariant} if, for every $g\in G$, the left shift $x\mapsto gx:\text{ }G\to G$ is continuous in $\tau$.
A group $G$ endowed with a left invariant topology is called {\em left topological}. 
We say that a left topological group $G$ is maximal if $G$ is maximal as a topological space.
By \cite[$\S$2]{b26}, every infinite group $G$ of cardinality $\kk$ admits $2^{2^\kk}$ distinct maximal left invariant topologies.

A left topological group $G$ of cardinality $\kk$ is called $\kk$-bounded if each neighborhood $U$ of the identity $e$ (equivalently, each non-empty open subset of $G$) is left $\kk$-large.
We note that each left $\kk$-thick subset of $G$ meets every left $\kk$-large subset of $G$. 
It follows that $A$ is dense in every $\kk$-bounded topology on $G$.
Hence, if $G$ can be partitioned into two left $\kk$-thick subsets then every $\kk$-bounded topology on $G$ is not maximal.
Applying Theorems ~\ref{t8.1} and ~\ref{t8.3}, we get the following theorems.

\begin{Th} \label{t8.4}
An infinite group $G$ of cardinality $\kk$ admits no maximal left invariant $\kk$-bounded topology provided that 
either $\kk$ is regular or $G$ is a free group. 
\end{Th}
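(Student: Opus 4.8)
The plan is to reduce the theorem, in both cases, to the single combinatorial task of splitting $G$ into two left $\kk$-thick subsets, and then to invoke the implication already recorded in the paragraph preceding the statement. Indeed, once we have a partition $G = B_1 \cup B_2$ with both $B_i$ left $\kk$-thick, each $B_i$ is dense in every $\kk$-bounded topology on $G$ (a left $\kk$-thick set meets every left $\kk$-large set, and all nonempty open sets are left $\kk$-large), so by the remark before the theorem no such topology is maximal. Hence $G$ admits no maximal left invariant $\kk$-bounded topology. Thus the whole content to be supplied is the construction of a suitable two-cell partition.

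For the regular case I would apply Theorem~\ref{t8.1} with $\lambda = \kk$, obtaining a partition of $G$ into $\kk$ (in particular at least two) $\kk$-thick subsets $\{A_\gamma : \gamma < \kk\}$. Collapsing these into two nonempty classes, for instance $B_1 = A_0$ and $B_2 = \bigcup_{0 < \gamma < \kk} A_\gamma$, keeps both cells thick by monotonicity: if $A \subseteq B$ and for every $F \in [G]^{<\kk}$ there is $a$ with $Fa \subseteq A$, then the same $a$ gives $Fa \subseteq B$. Since each $B_i$ is $\kk$-thick it is in particular left $\kk$-thick, and the reduction applies.

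For the free case I would start from Theorem~\ref{t8.3}, which yields a partition $F_A = B_1 \cup B_2$ (with $\kk = |F_A| = |G|$) in which neither cell is left $\kk$-large; when $|A| \le \aleph_0$ the group is countable and $\kk = \aleph_0$ is regular, so that subcase is already covered above. The key step is the largeness/thickness duality: a subset $A$ is left $\kk$-thick if and only if $G \setminus A$ is not left $\kk$-large. This follows from the characterization in Section~\ref{s2} in its left-handed form (left $\kk$-thickness $=$ meeting every left $\kk$-large set) together with upward monotonicity of largeness ($L \subseteq M$ and $G = FL$ force $G = FM$), so that "$G \setminus A$ contains a left $\kk$-large set" collapses to "$G \setminus A$ is left $\kk$-large"; alternatively it can be read off directly from the definitions. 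Applying it to the Theorem~\ref{t8.3} partition: since $B_2 = G \setminus B_1$ is not left $\kk$-large, $B_1$ is left $\kk$-thick, and symmetrically $B_2$ is left $\kk$-thick. So $G$ again splits into two left $\kk$-thick subsets and the reduction finishes the proof.

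I do not expect a serious obstacle here, since the heavy lifting is already in place in Theorems~\ref{t8.1} and~\ref{t8.3} and in the density observation. The only genuinely non-formal point is the duality argument in the free case, namely translating "both cells fail to be left $\kk$-large" into "both cells are left $\kk$-thick"; the main care needed is to confirm that the Section~\ref{s2} characterization is used in its left-handed version rather than the two-sided one.
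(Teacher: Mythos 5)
Your proposal is correct and is essentially the paper's own argument: the paper proves Theorem~\ref{t8.4} precisely by combining the remark preceding it (a partition into two left $\kk$-thick subsets makes every $\kk$-bounded topology non-maximal, since both cells are dense) with Theorem~\ref{t8.1} in the regular case and Theorem~\ref{t8.3} in the free case, where the thick/large duality you spell out (a cell is left $\kk$-thick if and only if its complement is not left $\kk$-large) is exactly the implicit step. Your added details --- collapsing the $\kk$-cell partition to two cells, reading the Section~\ref{s2} characterization in its left-handed form, and routing the countable-alphabet free group through the regular case --- are just the bookkeeping the paper leaves to the reader.
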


The following theorem answers affirmatively Question 4.4 from \cite{b26}.
\begin{Th}\label{t8.5} 
Every Abelian group $G$ of singular cardinality $\kk$ admits a maximal left invariant $\kk$-bounded topology. 
\end{Th}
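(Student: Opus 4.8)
The plan is to realize the desired topology as the left invariant topology determined by a \emph{free idempotent} ultrafilter all of whose members are left $\kk$-large. Recall that a free ultrafilter $p\in G^*$ is idempotent ($pp=p$) precisely when the sets $\{gU:U\in p\}$, $g\in G$, form the neighbourhood bases of a left invariant topology $\tau_p$ on $G$ in which $p$ is exactly the neighbourhood filter at $e$; the point is that for idempotent $p$ one has $\{x:x^{-1}U\in p\}\in p$ for every $U\in p$, which is the verification that the putative basic neighbourhoods are neighbourhoods of each of their points. Since $p$ is free, $\tau_p$ has no isolated points, and since $p$ is an ultrafilter the only free ultrafilter containing the neighbourhood filter of $e$ is $p$ itself; by left invariance the same holds at every point, so $\tau_p$ is maximal. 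Finally $\tau_p$ is $\kk$-bounded as soon as every $U\in p$ is left $\kk$-large. Thus everything reduces to producing a free idempotent ultrafilter whose members are all left $\kk$-large.

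To this end I would set $\Lambda=\{p\in\beta G:\text{every }A\in p\text{ is left }\kk\text{-large}\}$. First I would observe that $\Lambda=\bigcap\{\overline{G\setminus A}:A\text{ is not left }\kk\text{-large}\}$, so $\Lambda$ is closed in $\beta G$; and since no singleton is left $\kk$-large, no member of $\Lambda$ is principal, i.e. $\Lambda\subseteq G^*$. The role of the hypotheses (Abelian, singular $\kk$) is concentrated in a single step: by Theorem~\ref{t8.2}$(iii)$, finitely many non-large subsets can never cover $G$, so the family $\{G\setminus A:A\text{ not left }\kk\text{-large}\}$ has the finite intersection property and $\Lambda\neq\varnothing$. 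This is exactly where regular $\kk$ fails, in agreement with Theorem~\ref{t8.4}.

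Next I would check that $\Lambda$ is a subsemigroup of $\beta G$. If $p,q\in\Lambda$ and $A\in pq$, then $B=\{x:x^{-1}A\in q\}\in p$ is left $\kk$-large; picking $x\in B$, the set $x^{-1}A$ lies in $q$, hence is left $\kk$-large, and therefore its left translate $A=x(x^{-1}A)$ is left $\kk$-large as well, so $pq\in\Lambda$. Being a closed subsemigroup of the compact right topological semigroup $\beta G$, $\Lambda$ is itself a compact right topological semigroup, so by Ellis' theorem it contains an idempotent $p=pp$. This $p$ is free (as $\Lambda\subseteq G^*$), idempotent, and has only left $\kk$-large members, so the topology $\tau_p$ described above is the required maximal left invariant $\kk$-bounded topology.

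The main obstacle is the nonemptiness of $\Lambda$: this is the only place the structure of $G$ enters, and it rests entirely on the partition result Theorem~\ref{t8.2}$(iii)$ for singular Abelian groups. Once $\Lambda\neq\varnothing$ is secured, the passage to an idempotent via Ellis' theorem and the translation of idempotency into a maximal $\kk$-bounded topology are formal and uniform across all groups; the only care needed there is the routine verification that an idempotent free ultrafilter really does determine a left invariant topology having it as the neighbourhood filter at $e$.
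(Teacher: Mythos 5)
Your proposal is correct and is essentially the paper's own proof: the paper introduces the same set $L=\{q\in\beta G:\text{ each member } Q\in q \text{ is left } \kk\text{-large}\}$, derives $L\neq\varnothing$ from Theorem~\ref{t8.2}$(iii)$ (combined with a standard partition-regularity/compactness argument, cited there as Theorem 5.7 of \cite{b17}), takes an idempotent $p$ in the closed subsemigroup $L$, and lets $p$ generate the topology by declaring $\{gP\cup\{g\}:P\in p\}$ a neighbourhood base at $g$. The differences are purely expository --- you spell out the finite-intersection-property, subsemigroup, and maximality verifications that the paper delegates to references --- apart from one cosmetic correction: since $p$ is free, the basic neighbourhoods must be taken as $gU\cup\{g\}$ rather than $gU$, exactly as in the paper's formulation.
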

\begin{proof}
We use a technique from \cite{b17}: endow $G$ with the discrete topology, identify the Stone-$\check{C}$ech compactification 
$\beta G$ of $G$ with the set of all ultrafilters on $G$ and consider $\beta G$ as a right topological semigroup. We put
$$L=\{q\in\beta G:\text{ each member } Q\in q\text{ is left }\kk\text{-large}\}.$$
Applying Theorem~\ref{t8.2}$(iii)$ and Theorem 5.7 from \cite{b17}, we conclude that $L\neq\varnothing$. 
It is easy to see that $L$ is a closed subsemigroup of $\beta G$ so $L$ has some idempotent $p$.
The ultrafilter $p$ defines a desired topology on $G$ in the following way:
for each $g\in G$, the family $\{gP\cup\{g\}:P\in p\}$ forms a base of neighborhoods of $g$.
\end{proof}

{\em Comments.} For $\lambda=\aleph_0$, Theorem~\ref{t8.1} was proved in \cite{b22} to partition each infinite
totally bounded topological group $G$ into $|G|$ dense subsets. The results of this section are from preprint \cite{b42}.
More on partitions of topological groups into dense subsets see \cite{b25} and \cite[Chapter 13]{b14}.

\section{Partitions into thin subsets}\label{s9}

Given an infinite group $G$, we denote by $\mu(G)$ and $\eta(G)$ the minimal cardinals such that 
$G$ can be partitioned into $\mu(G)$ and $\eta(G)$ thin and sparse subsets respectively.

\begin{Th}\label{t9.1} 
For a group $G$, $\mu(G)=|G|$ if $|G|$ is a limit cardinal and $\mu(G)=\gamma$ if $|G|=\gamma^+$.
\end{Th}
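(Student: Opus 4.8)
The plan is to prove the matching inequalities in each case, organizing everything around the filtration machinery of Section~\ref{s5} and an induction on $|G|$. Two ingredients are immediate and uniform. Since every singleton is thin, $\m(G)\le|G|$ always, which already supplies the upper bound in the limit case. For a first lower bound, a thin subset is left small and the left small subsets form an ideal in $\PP_G$ not containing $G$; hence a finite union of thin subsets is never all of $G$, so $\m(G)\ge\aleph_0$ for every infinite $G$. I will also use the monotonicity $\m(H)\le\m(G)$ for a subgroup $H\le G$: if $G=\bigcup_i A_i$ with each $A_i$ thin, then each $A_i\cap H$ is thin in $H$, because $g(A_i\cap H)\cap(A_i\cap H)\subseteq gA_i\cap A_i$ is finite for $g\in H\setminus\{e\}$.

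For $|G|=\gamma^+$ I would get the upper bound $\m(G)\le\gamma$ by a level-by-level coloring along a filtration $\{G_\alpha:\alpha<\gamma^+\}$ with $|G_\alpha|\le\gamma$, exactly parallel to the construction in Theorem~\ref{t5.3}: on each increment $G_{\alpha+1}\setminus G_\alpha$ fix a partition into $\le\gamma$ subsets thin in $G_{\alpha+1}$, and color $g$ by the tuple of colors of its prefixes in the canonical representation $g=x_{\gamma_{s(g)}(g)}\dots x_{\gamma_1(g)}$. The set of tuples that occur has cardinality $\le\gamma$, and the verification that each color class is thin is the thin analogue of the scattered computation cited from \cite{b5}; this verification is the step I expect to demand the most routine care.

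The heart of the matter is the lower bound $\m(G)\ge\gamma$ for $|G|=\gamma^+$, and here is the argument I would push through. Suppose $G=\bigcup_{i<\lambda}A_i$ is a partition into thin cells with $\lambda<\gamma$, and put $\lambda'=\max(\lambda,\aleph_0)\le\gamma$. Fix a filtration with $|G_\alpha|\le\gamma$ passing through a subgroup $H=G_{\alpha_0}$ of cardinality $\gamma$, and note that $\gamma^+$ of the increments $X_\alpha$ with $\alpha\ge\alpha_0$ are non-empty (otherwise $|G|\le\gamma$); choose $x_\alpha\in X_\alpha$ for $\gamma^+$ such levels. For $h\in H$ set $\phi_\alpha(h)=i(hx_\alpha)$, where $i(g)$ is the index of the cell containing $g$. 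Since $h\in G_\alpha$, the element $hx_\alpha$ has canonical top level $\alpha$, so the $h'x_\alpha$ are pairwise distinct across levels, and for $h\ne h'$ we have $hx_\alpha=u\,(h'x_\alpha)$ with $u=hh'^{-1}\in H\setminus\{e\}$ independent of $\alpha$. The key point is that thinness of every cell forces $|\{g:i(g)=i(ug)\}|=|\bigcup_{j<\lambda}(A_j\cap u^{-1}A_j)|\le\lambda'$; therefore $B_{h,h'}=\{\alpha:\phi_\alpha(h)=\phi_\alpha(h')\}$ injects into $\{g:i(g)=i(ug)\}$ via $\alpha\mapsto h'x_\alpha$, so $|B_{h,h'}|\le\lambda'$. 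Summing over the $\gamma$ pairs from $H$, at most $\gamma\cdot\lambda'=\gamma$ levels can make $\phi_\alpha$ non-injective, so among the $\gamma^+$ levels some $\phi_\alpha\colon H\to\lambda$ is injective, forcing $\gamma=|H|\le\lambda$ and contradicting $\lambda<\gamma$. With the upper bound this gives $\m(G)=\gamma$. This counting of $\gamma^+$ columns $\phi_\alpha$ is the genuinely new idea and the place where the successor/predecessor jump is actually produced; it is the part on which I would concentrate.

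Finally, for $|G|=\kappa$ with $\kappa$ a limit cardinal, the upper bound is the singleton bound, and the lower bound $\m(G)\ge\kappa$ follows by induction on $|G|$ with monotonicity: for every infinite $\nu<\kappa$ there is a subgroup $H_\nu$ with $|H_\nu|=\nu$, and by the successor case (when $\nu$ is a successor) or the inductive hypothesis (when $\nu$ is a limit) the value $\m(H_\nu)$ is either $\nu$ or its predecessor. Since $\m(G)\ge\m(H_\nu)$ and these values are cofinal in the limit cardinal $\kappa$, we conclude $\m(G)\ge\sup_{\nu<\kappa}\m(H_\nu)=\kappa$, hence $\m(G)=\kappa$.
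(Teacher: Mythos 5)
Two remarks before the substance. First, the paper itself contains no proof of Theorem~\ref{t9.1}: it is quoted from \cite{b29} (see the Comments closing Section~\ref{s9}), so your argument can only be judged on its own merits. Second, judged so, the parts you identified as the heart of the matter are correct: the successor lower bound (bounding each $B_{h,h'}$ by $\lambda'$ via the injection $\alpha\mapsto h'x_\alpha$ into $\bigcup_j(A_j\cap u^{-1}A_j)$, then counting $\gamma\cdot\lambda'=\gamma<\gamma^+$ spoiled levels to extract an injective $\phi_\alpha:H\to\lambda$) is complete and right, and the monotonicity reduction plus the subgroup argument settles the limit case.

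The genuine gap is in the successor upper bound, exactly at the step you dismissed as routine. As stated --- ``on each increment $G_{\alpha+1}\setminus G_\alpha$ fix a partition into $\le\gamma$ subsets thin in $G_{\alpha+1}$ and color $g$ by the tuple of colors of its prefixes'' --- the conclusion that each color class is thin is \emph{false}. Take $G=\bigoplus_{\alpha<\w_1}{\ZZ}_2$ with basis $(e_\alpha)_{\alpha<\w_1}$ and $G_\alpha=\langle e_\iota:\iota<\alpha\rangle$, so the increment at level $\lambda$ is $G_\lambda+e_\lambda$. Give $e_0$ and $e_1$ color $1$ on their levels, and at each level $\lambda\ge2$ let one cell of the level partition be the two-element (hence thin) set $\{e_0+e_\lambda,\ e_1+e_\lambda\}$ with color $1$, all remaining cells being singletons. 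Then for every $\lambda\ge2$ both $g'_\lambda=e_0+e_\lambda$ and $g_\lambda=e_1+e_\lambda$ receive the prefix tuple $(1,1)$, while $g_\lambda=u+g'_\lambda$ for the fixed $u=e_0+e_1$; so the color class of $(1,1)$ meets its translate by $u$ in uncountably many points and is nowhere near thin. The point is that thinness of the level cells only controls collisions \emph{within} one level, and such collisions can be synchronized across $\gamma^+$ levels; this is a real difference from the scattered case you are imitating from Theorem~\ref{t5.3} and \cite{b5}. The fix is to make the level colorings injective, i.e. partition each increment into singletons (possible since $|G_{\alpha+1}|\le\gamma$); then two same-level elements with equal color are equal, and your verification closes by induction along the filtration. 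In fact, with injective enumerations $\chi_\alpha:G_{\alpha+1}\setminus G_\alpha\to\gamma$ the whole tuple is unnecessary: the sets $A_\nu=\{g\in G\setminus\{e\}:\chi_\alpha(g)=\nu,\ g\in G_{\alpha+1}\setminus G_\alpha\}$ already satisfy $|uA_\nu\cap A_\nu|\le2$ for every $u\neq e$ (equal levels force $g=u^{-1}g$; unequal levels force the higher of the two elements to lie on the level of $u$, and injectivity then determines it), which partitions $G$ into $\gamma$ thin subsets directly.
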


\begin{Th}\label{t9.2} 
Let $G$ be a group, $\kk$ be an infinite cardinal. If $|G|>(\kk^+)^{\aleph_0}$ then $\eta(G)>\kk$. 
If $|G|\le\kk^+$ then $\eta(G)\le\kk$.
\end{Th}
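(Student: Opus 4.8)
The plan is to prove the two implications separately; together they trap $\eta(G)$ from opposite sides (they coincide exactly when $(\kk^+)^{\aleph_0}=\kk^+$). Throughout I will use that every singleton is sparse (indeed thin), and that every sparse subset is scattered: this is immediate from the ballean definitions, since for $Y\subseteq A$ one has $B_Y(y,\alpha)=Y\cap B_A(y,\alpha)$, so $B_A(y,\alpha)\subseteq B_A(y,\beta)$ forces $B_Y(y,\alpha)\subseteq B_Y(y,\beta)$.

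\emph{Upper bound} ($|G|\le\kk^+\Rightarrow\eta(G)\le\kk$). If $|G|\le\kk$ there is nothing to prove: the partition of $G$ into its $\le\kk$ singletons consists of sparse cells. So assume $|G|=\kk^+$. Since $\kk^+$ is regular, I fix a filtration $\{G_\alpha:\alpha<\kk^+\}$ with $|G_\alpha|\le\kk$ for every $\alpha$, and for each $0<\alpha<\kk^+$ I fix an \emph{injection} $\chi_\alpha$ of the shell $G_{\alpha+1}\setminus G_\alpha$ (of size $\le\kk$) into $\kk$. Mirroring the proof of Theorem~\ref{t5.3}, but with $\kk$ in place of $\NN$, I use the canonical representation $g=x_{\gamma_{s(g)}(g)}\dots x_{\gamma_1(g)}$ to define
$$\chi(g)=\bigl(\chi_{\gamma_{s(g)}(g)}(x_{\gamma_{s(g)}(g)}),\;\chi_{\gamma_{s(g)-1}(g)}(x_{\gamma_{s(g)}(g)}x_{\gamma_{s(g)-1}(g)}),\;\dots,\;\chi_{\gamma_1(g)}(g)\bigr)\in\bigcup_{n\in\NN}\kk^n.$$
Because $|\bigcup_{n\in\NN}\kk^n|=\kk$, this partitions $G\setminus\{e\}$ into $\kk$ cells, and it remains to see that each cell is sparse.

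To verify sparseness of a cell $A=\chi^{-1}(\bar m)$ (all of whose words have a fixed length $n$), I would take an arbitrary infinite $S\subseteq G$ and produce a finite $F\subseteq S$ with $\bigcap_{g\in F}gA$ finite (equivalently, with $\vt_p(A)$ finite for all $p\in G^*$). The leverage is \emph{injectivity} of the $\chi_\alpha$: if $h$ has top level $\gamma_1(h)$ strictly above all levels occurring in some $g\in F$, then $g^{-1}h$ lies in the same shell $G_{\gamma_1(h)+1}\setminus G_{\gamma_1(h)}$ and shares $h$'s top right-coset representative, so membership $g^{-1}h\in A$ pins down, via injectivity, the top-level data of $h$ in terms of $\gamma_1(h)$ alone; descending through the finitely many coordinates and choosing $F$ so that the resulting constraints become mutually incompatible leaves only finitely many admissible $h$. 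This is the sparse analogue of the scattered verification underlying Theorem~\ref{t5.3}, and carrying it out cleanly is the main technical point on this side; the hypothesis $|G|\le\kk^+$ enters precisely in guaranteeing shells of size $\le\kk$, hence the injections into $\kk$ and the total of $\kk$ colours.

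\emph{Lower bound} ($|G|>(\kk^+)^{\aleph_0}\Rightarrow\eta(G)>\kk$). I argue by contraposition: assuming $G=\bigcup_{i<\kk}A_i$ with every $A_i$ sparse, I aim to force $|G|\le(\kk^+)^{\aleph_0}$. Since each $A_i$ is sparse, it is scattered, hence by the characterisation of scattered subsets in Section~\ref{s3} it contains no piecewise shifted $FP$-set. Thus it suffices to prove the Ramsey-type statement: \emph{if $|G|>(\kk^+)^{\aleph_0}$, then every colouring $c:G\to\kk$ admits a monochromatic piecewise shifted $FP$-set.} I would build the generators $(g_n)_{n\in\w}$ and shifts $(b_n)_{n\in\w}$ by a recursion of length $\w$, maintaining at each stage a ``reservoir'' of candidates of size $>(\kk^+)^{\aleph_0}$ and applying $\kk$-fold pigeonhole to keep all finite products already formed within a single colour. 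The delicate point — and the principal obstacle for the whole theorem — is the cardinal bookkeeping: each stage refines the reservoir by a $\kk$-colouring, and the threshold $(\kk^+)^{\aleph_0}$ is exactly what is required for one branch of this $\w$-indexed tree of $\kk$-refinements (which has at most $\kk^{\aleph_0}\le(\kk^+)^{\aleph_0}$ branches) to survive all $\w$ stages with a reservoir that stays cofinally large. Equivalently, the same data encode each $g\in G$ by a countable descent sequence with coordinates in $\kk^+$, the sparseness of the cells ensuring the encoding terminates after countably many steps; this injects $G$ into $(\kk^+)^{\le\w}$ and yields $|G|\le(\kk^+)^{\aleph_0}$.
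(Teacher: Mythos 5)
The paper itself contains no proof of Theorem~\ref{t9.2} (it is quoted from \cite{b30}), so your argument has to stand on its own, and its lower-bound half does not. You reduce ``some cell is not sparse'' to ``some cell contains a piecewise shifted $FP$-set'', i.e.\ you set out to prove: if $|G|>(\kk^+)^{\aleph_0}$ then \emph{every} colouring $c:G\to\kk$ has a monochromatic piecewise shifted $FP$-set. That statement is false for every infinite $\kk$ and every group, no matter how large. Indeed, by Theorem~\ref{t5.3} every infinite group can be partitioned into $\aleph_0$ scattered subsets, and by Theorem 3.4(iv) a scattered set contains no piecewise shifted $FP$-set; viewing that partition as a $\kk$-colouring (only countably many colours being used) gives a colouring of $G$ with no monochromatic piecewise shifted $FP$-set whatsoever. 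Put differently, if your strategy worked it would prove that large groups cannot be partitioned into $\kk$ \emph{scattered} subsets, contradicting Theorem~\ref{t5.3}. The information you discard in the very first step --- the difference between sparse and scattered --- is exactly what the theorem is about: the threshold $(\kk^+)^{\aleph_0}$ is invisible to the scattered/$FP$-set characterization. Consequently no care with the ``reservoir'' recursion can rescue this route, and your closing reformulation (an injection of $G$ into $(\kk^+)^{\le\w}$) is asserted rather than constructed, being offered as equivalent to the false Ramsey claim. What failure of sparseness actually provides is a \emph{grid}, not an $FP$-set: $A$ is not sparse if and only if there are injective sequences $(s_n)_{n<\w}$, $(x_m)_{m<\w}$ with $s_n^{-1}x_m\in A$ whenever $n\le m$. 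So the combinatorial core you need is a polarized-partition statement about monochromatic products --- precisely the kind of statement the paper points to in Section~\ref{s9} (for an $\aleph_0$-colouring of $\aleph_2\times\aleph_2$, a monochrome $A\times B$ with $|A|=|B|=\aleph_0$) --- and that is where the hypothesis $|G|>(\kk^+)^{\aleph_0}$ has to be spent.

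Your upper-bound half is correct in substance, and it can be closed much more cleanly than you indicate. Since each $\chi_\alpha$ is injective and the last coordinate of $\chi(g)$ is $\chi_{\gamma_1(g)}(g)$ (the last partial product is $g$ itself), every cell $A=\chi^{-1}(\bar m)$ meets each shell $G_{\alpha+1}\setminus G_\alpha$ in at most one point. For such a partial transversal any two distinct translates meet in at most two points: if $x\in gA\cap g'A$ with $g\ne g'$, put $a=g^{-1}x$, $b=(g')^{-1}x$; then $a\ne b$, so $a$ and $b$ lie in different shells, and $ab^{-1}=g^{-1}g'$ forces the higher of their two levels to equal the (fixed) level of $g^{-1}g'$, which by the one-point-per-shell property determines $a$ (or, in the symmetric case, $b$), hence $x$, up to two choices. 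Thus any two-element $F\subseteq S$ already witnesses sparseness, with no need for your case analysis on top levels. This also shows the tuple colouring is overkill --- the single colour $g\mapsto\chi_{\gamma_1(g)}(g)$ suffices --- and, alternatively, the entire upper bound follows in one line from Theorem~\ref{t9.1} together with the paper's remark in Section~\ref{s2} that ($n$-)thin subsets are sparse.
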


In particular, if $|G|>2^\kk$ then $\eta(G)>\kk$, if $\eta(G)=\aleph_0$ then $\aleph_0\le|G|\le2^{\aleph_0}$.
\begin{Qs} Does $|G|=2^{\aleph_0}$ imply $\eta(G)=\aleph_0$? \end{Qs}

Under CH, Theorem~\ref{t9.2} gives an affirmative answer to this question.
To answer the question negatively under $\neg$CH, it suffices to show that, for any $\aleph_0$-coloring of
$\aleph_2\times\aleph_2$, there is a monochrome subset $A\times B$, $A,B\subset\aleph_2$, $|A|=|B|=\aleph_0$.

Given a group $G$ and a subset $A$ of $G$, how one can detect whether $A$ belongs to the ideal $<\tau_G>$ in $\PP_G$
generated by thin subsets. To answer this question we need some test which, for given $A\subset G$ and $m\in\NN$,
detects whether $A$ can be partitioned in $\le m$ thin subsets.

Remind that a subset $A$ is $m$-thin if, for each $F\in[G]^{<\aleph_0}$, there is $K\in[G]^{<\aleph_0}$ such that
$|Fg\cap A|\le m$ for each $g\in G\setminus K$. It is easy to see that a union of $\le m$ thin subsets is $m$-thin.
If $G$ is countable, the converse statement holds \cite{b20}, so $<\tau_G>=\bigcup_{m\in\NN}\tau_{G,m}$ where 
$\tau_{G,m}$ is a family of all $m$-thin subsets of $G$.

{\em Can every $m$-thin subset of an arbitrary uncountable group be partitioned in $\le m$ subsets?}

Answering this question from \cite{b20}, G. Bergman constructed a group $G$ of cardinality $\aleph_2$ and a $2$-thin
subset $A$ of $G$ which cannot be partitioned into two thin subsets.
His remarkable construction can be found in \cite{b37} and based on the following combinatorial claim.

For a group $G$ and $g\in G$, we say that $G\times\{g\}$ is a horizontal line in $G\times G$, $\{g\}\times G$ is a 
vertical line in $G\times G$, $\{(x,gx):x\in G\}$ is a diagonal in $G\times G$.
For a group $G$ with the identity $e$, the following statements hold

\begin{itemize}
\item[(1)] if $|G| \geqslant \aleph_2$ and $\chi: G\times G \to \{1,2,3\} $ then there is $g\in G$, $g\ne e$ such 
that either some horizontal line $G\times \{g\}$ has infinitely many points of color 1, or some vertical line 
$\{g\}\times G$ has infinitely many points of color 2, or some diagonal $\{(x,gx): x\in G\}$ has infinitely many 
points of color 3;

\item[(2)]  if $|G| \leqslant \aleph_1$ then there is a coloring $\chi: G\times G \to \{1,2,3\} $ such that each 
horizontal line has only finite number of points of color 1, each vertical line has only finite number of points of 
color 2, each diagonal has only finite number of points of color 3.
\end{itemize}

\begin{Th}\label{t9.3}
For each natural number $m\ge2$, there exist an Abelian group $G$ of cardinality $\aleph_n$, $n=\frac{m(m+1)}{2}-1$,
and a $2$-thin subset $A$ of $G$ which cannot be partitioned into $m$ thin subsets.
\end{Th}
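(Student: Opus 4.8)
The plan is to generalize Bergman's $m=2$ construction, replacing the three-colour claim (1)--(2) by a $\binom{m+1}{2}$-colour analogue whose threshold cardinality is exactly $\aleph_n$, $n=\frac{m(m+1)}{2}-1=\binom{m+1}{2}-1$. The numerology is dictated by two coincidences that I would make the backbone of the argument: first, $\binom{m+1}{2}=\binom m2+m$ is the number of \emph{unordered pairs with repetition} from $m$ colours; second, $\binom{m+1}{2}$ is the number of ``canonical directions'' $d_{ij}=e_i-e_j$ ($0\le i<j\le m$, with $e_0=0$) in the Abelian group $G=B^{m}$. So I would take $G=B^{m}$ with $|B|=\aleph_n$, whence $|G|=\aleph_n$ and $G$ is Abelian, and fix an enumeration of the directions $d_{ij}$ by the unordered colour-pairs $\{i,j\}$.

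The combinatorial engine I would isolate is a dichotomy extending (1)--(2): for a colouring $\chi\colon B^{m}\to\{d_{ij}:0\le i<j\le m\}$, if $|B|\ge\aleph_n$ then some coset of $\langle d_{ij}\rangle$ carries infinitely many points of colour $d_{ij}$ (a monochromatic line \emph{matched} to its own direction), while if $|B|\le\aleph_{n-1}$ a colouring avoiding all matched monochromatic lines exists. The positive half is a higher-dimensional Sierpi\'nski/Kuratowski free-set phenomenon of arity $\binom{m+1}{2}$, whose sharp cardinal threshold is $\aleph_{\binom{m+1}{2}-1}$; the negative half is the corresponding transfinite construction of a ``good'' colouring below the threshold. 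Statements (1)--(2) are exactly the case $m=2$ ($H,V,D$ being $d_{01},d_{02},d_{12}$ up to sign), so I would prove the general dichotomy by induction on the number $\binom{m+1}{2}$ of directions, peeling off one direction at a time.

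With this in hand I would build the $2$-thin set $A\subseteq G$ as a structured transversal (a ``generic graph'') so that every translate $Fa$ meets $A$ in at most two points and, when it meets $A$ in two points, those points are aligned along one of the canonical directions $d_{ij}$; the negative half of the dichotomy, applied to the $\aleph_{n-1}$-substructures of a filtration of $B$, is precisely what lets me realise such an $A$ genuinely $2$-thin at cardinality $\aleph_n$ and not below. Then, given any partition $A=A_1\cup\dots\cup A_m$, I would colour each two-point window by the unordered pair $\{i,j\}$ of the colours of its two points, transporting this to a colouring $\chi$ of the grid along the matched direction $d_{ij}$. The positive half yields an infinite matched monochromatic line of some colour $\{i,j\}$: if $i=j$ this is an infinite family of $d_{ij}$-aligned pairs inside $A_i$, so $gA_i\cap A_i$ is infinite for $g$ a multiple of $d_{ij}$ and $A_i$ is not thin; if $i\ne j$ the two colours must alternate along the line, so every second point lies in $A_i$, again producing an infinite subfamily of $A_i$ aligned in direction $2d_{ij}$. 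Either way some cell is not thin, so $A$ admits no partition into $m$ thin subsets.

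The hard part will be the combinatorial dichotomy, and specifically proving that $\aleph_n$ is the \emph{exact} threshold of the arity-$\binom{m+1}{2}$ matched-line relation: the positive direction must force a line in one of the $\binom{m+1}{2}$ directions while the inductive step preserves the freedom in the directions not yet used, and the negative direction must produce a colouring that is finite on lines in \emph{every} direction simultaneously, which is the delicate point driving the cardinality up to $\aleph_n$. A secondary obstacle is engineering $A$ so that its two-point windows realise all the directions $d_{ij}$ faithfully, so that the colour-pair reduction lands in the grid governed by the dichotomy; this is where I expect to reuse the finer features of Bergman's original construction.
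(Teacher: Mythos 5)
Your strategy is the one the paper itself points to: Theorem~\ref{t9.3} is proved in \cite{b37} by generalizing Bergman's construction together with the displayed three-colour claim $(1)$--$(2)$, and your numerology is exactly right --- the $\frac{m(m+1)}{2}$ colours are the unordered pairs (with repetition) of cell indices, matched to $\frac{m(m+1)}{2}$ pairwise independent directions, with the forcing threshold $\aleph_{n}$, $n=\frac{m(m+1)}{2}-1$, recovering Bergman's case $m=2$, $\aleph_2$. So the skeleton is sound. But as a proof the proposal is missing both pillars. The generalized dichotomy is itself a Kuratowski--Sierpi\'nski-type theorem whose forcing half at exactly $\aleph_n$ requires a real argument; ``induction peeling off one direction at a time'' is the right genre of proof but is only named, not carried out. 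Note also that for Theorem~\ref{t9.3} only the forcing half is used; your plan to use the negative half ``to realise $A$ genuinely $2$-thin'' is misplaced --- $2$-thinness of $A$ is a direct consequence of how the pairs are spread out in $G$, while the negative half of the claim serves the complementary positive results (such as Theorem~\ref{t9.5}), not the construction of $A$.

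The more serious gap is the transfer between partitions of $A$ and colourings of the grid, which you describe only as ``structured transversal'' and defer to ``the finer features of Bergman's original construction.'' This is precisely where the difficulty lives. Concretely: to convert ``infinitely many windows of colour $\{i,i\}$ on a line of its assigned direction'' into ``$A_i$ is not thin'' you need one fixed $g\neq e$ with $gA_i\cap A_i$ infinite, which forces the internal offset of the two-point windows to be constant along every line of that direction; demanding this simultaneously for the $m$ directions assigned to the diagonal colours $\{1,1\},\dots,\{m,m\}$ forces the offset to be globally constant, and a union of pairs with one global offset $c$ partitions trivially into two thin sets (bottom points and top points), so the naive ``windows aligned along canonical directions'' design defeats itself. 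Bergman's construction is exactly an escape from this tension, and your proposal does not contain that escape. Moreover, your handling of the mixed colours $\{i,j\}$, $i\neq j$, fails as stated: the colour is unordered, so split windows along a line carry no orientation, and nothing makes the cells ``alternate'' so that ``every second point lies in $A_i$''; at best pigeonhole gives infinitely many windows split with the same orientation, and even then their $A_i$-halves need not admit any fixed mutual translation, so no violation of thinness follows without further structure that the proposal never specifies.
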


\begin{Th}\label{t9.4}
There are an Abelian group $G$ of cardinality $\aleph_\w$ and a $2$-thin subset $A$ of $G$ which cannot be
partitioned into $m$ thin subsets for each $m$ in $\NN$.
\end{Th}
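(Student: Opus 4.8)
The plan is to build $G$ as a direct sum of the groups supplied by Theorem~\ref{t9.3} and to paste their witnesses into one set. For each $m\ge2$, Theorem~\ref{t9.3} gives an Abelian group $G_m$ of cardinality $\aleph_{n(m)}$, $n(m)=\tfrac{m(m+1)}2-1$, together with a $2$-thin subset $A_m\subseteq G_m$ admitting no partition into $m$ thin subsets; discarding the identity if necessary, I assume $e\notin A_m$. Put $G=\bigoplus_{m\ge2}G_m$ and let $\iota_m\colon G_m\to G$ be the canonical embedding. Since $n(m)\to\infty$, the cardinals $\aleph_{n(m)}$ are cofinal in $\{\aleph_k:k<\w\}$, so a routine count of finitely supported sequences gives $|G|=\aleph_0\cdot\sup_m|G_m|=\aleph_\w$, and $G$ is Abelian. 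I then set $A=\bigcup_{m\ge2}\iota_m(A_m)$; as distinct summands meet only in $e\notin A_m$, the union is disjoint and $A\cap\iota_m(G_m)=\iota_m(A_m)$ for every $m$. Note that every element of $A$ has support a single coordinate.

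The first task is to verify that $A$ is $2$-thin, and this is where the real work lies. Fix a finite $F\subseteq G$ with $e\in F$ and let $S\subseteq\NN$ be the (finite) union of the supports of the elements of $F$. Given $g\in A$, say $g=\iota_m(a)$ with $a\in A_m$, I analyse $Fg\cap A$ by supports. If $m\notin S$, then for $f\ne e$ the element $fg$ carries the nonzero coordinate $a$ at $m$ together with a nonzero coordinate of $f$ at some $j\in S$, $j\ne m$, so $fg$ has two nonzero coordinates and cannot lie in $A$; thus $|Fg\cap A|\le1$. If $m\in S$, an element $fg$ lands in $A$ only when it is supported on a single coordinate: either on $\{m\}$, which forces $f\in\iota_m(G_m)$ and reduces the count to $|F'_m a\cap A_m|$ for the finite set $F'_m=\iota_m^{-1}(F\cap\iota_m(G_m))\ni e$, or on some $\{k\}$ with $k\ne m$, which forces $a=f_m^{-1}$ for the $m$-coordinate $f_m$ of $f$. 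The second alternative occurs only for the finitely many $g$ with $a\in\{f_m^{-1}:f\in F\}$, while, since $A_m$ is $2$-thin, the first alternative contributes at most $2$ once $a$ avoids a suitable finite $H_m\subseteq G_m$. Collecting $\bigcup_{m\in S}\iota_m(H_m)$ together with the finitely many exceptional $g$ into a finite set $H$, I obtain $|Fg\cap A|\le2$ for every $g\in A\setminus H$, so $A$ is $2$-thin.

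Finally I show that $A$ admits no partition into finitely many thin subsets. Fix $m\in\NN$, put $m'=\max(m,2)$, and suppose $A=C_1\cup\dots\cup C_m$ with each $C_i$ thin in $G$. Intersecting with the subgroup $\iota_{m'}(G_{m'})$ yields $\iota_{m'}(A_{m'})=\bigcup_i\bigl(C_i\cap\iota_{m'}(G_{m'})\bigr)$. Each piece is thin in $G$, being a subset of a thin set, and, lying inside $\iota_{m'}(G_{m'})$, is thin there as well: for $h\in G_{m'}\setminus\{e\}$ the set $h\bigl(C_i\cap\iota_{m'}(G_{m'})\bigr)\cap\bigl(C_i\cap\iota_{m'}(G_{m'})\bigr)$ is contained in the finite set $\iota_{m'}(h)C_i\cap C_i$. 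Transporting back through $\iota_{m'}$ partitions $A_{m'}$ into $m\le m'$ thin subsets of $G_{m'}$; padding with empty cells gives a partition into $m'$ thin subsets, contradicting Theorem~\ref{t9.3}. Hence no partition of $A$ into $m$ thin subsets exists, for any $m\in\NN$.

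The main obstacle is the $2$-thinness verification in the second paragraph: translating $A$ by an element of $G$ can carry points of one summand into configurations controlled by another, and only the support bookkeeping shows that such cross-interactions are either impossible or confined to a finite exceptional set, leaving the genuine count to be governed by the $2$-thinness of the individual $A_m$.
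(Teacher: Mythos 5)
Your proposal is correct and follows essentially the same route as the source: the survey itself prints no proof of Theorem~\ref{t9.4} (deferring to \cite{b37}), and the argument there is precisely this derivation from Theorem~\ref{t9.3} --- form the direct sum $G=\bigoplus_{m\ge2}G_m$ of the witness groups, take $A$ to be the union of the embedded witness sets, verify $2$-thinness by support bookkeeping, and get non-partitionability by restricting a putative partition to a single summand via the characterization that $B$ is thin iff $gB\cap B$ is finite for all $g\neq e$. Your handling of the two genuine subtleties (the finitely many exceptional translates created by cross-summand cancellation, and the padding/refinement step reducing ``$m$ cells'' to ``$m'$ cells'') is sound, so nothing further is needed.
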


\begin{Th}\label{t9.5}
Every $m$-thin subset of an Abelian group $G$ of cardinality $\aleph_n$ can be partitioned into $\le m^{n+1}$ thin subsets.
\end{Th}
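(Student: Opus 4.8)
The plan is to induct on $n$, the base case $n=0$ being the countable group, where the cited result of \cite{b20} already gives that every $m$-thin subset is the union of $\le m = m^{0+1}$ thin subsets. For the inductive step I would assume the bound $m^n$ for all Abelian groups of cardinality $\le\aleph_{n-1}$, take $G$ of cardinality $\aleph_n$ with an $m$-thin subset $A$, and fix a filtration $\{G_\alpha:\alpha<\w_n\}$ with $|G_\alpha|<\aleph_n$, together with the representatives $X_\alpha$ and the canonical representation from Section~\ref{s5}. Since $G$ is Abelian every $G_\alpha$ is normal, so each level $G_{\alpha+1}\setminus G_\alpha$ splits into cosets $G_\alpha x$, $x\in X_\alpha$, and I would assign to every $a\in A\setminus\{e\}$ its leading index $\lambda(a)=\gamma_1(a)$, that is, the coset $G_{\lambda(a)}x_a$ at level $\lambda(a)$ in which $a$ lies.

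First I would produce the ``$m^n$ part'' of the colouring. For a fixed coset $G_\alpha x$ the translated trace $Ax^{-1}\cap G_\alpha$ is an $m$-thin subset of $G_\alpha$: testing $m$-thinness against a finite $F\subseteq G_\alpha$ reduces, via the bijection $b\mapsto bx$, to testing $m$-thinness of $A$ against $F$ at the points $bx\in A$, for which the defining finite exceptional set of $A$ does the job. By the inductive hypothesis ($|G_\alpha|\le\aleph_{n-1}$) this trace partitions into $\le m^n$ thin subsets of $G_\alpha$, and a subset thin inside a single coset $G_\alpha x$ is automatically thin in $G$, since a translate $gS$ with $g\notin G_\alpha$ leaves the coset entirely. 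Applying this on every coset I obtain a colouring $c_{\mathrm{coset}}:A\to[m^n]$ whose classes are thin \emph{inside each single coset}, which disposes of all interactions $gS\cap S$ in which $g$ keeps $S$ inside its coset.

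The point of the whole argument is that these are essentially the only harmless interactions: a direct computation with the canonical representation shows that for a fixed $g\ne e$ with leading index $\beta=\gamma_1(g)$, if $a$ has leading index $\alpha_a>\beta$ then $ga$ stays in the coset of $a$ (already handled), whereas every remaining, potentially infinite, interaction is confined to level $\beta$ itself, namely $a$ and $ga$ both at level $\beta$ in different cosets, or one of them dropping to, or rising from, a level below $\beta$. Thus it remains to add a second colouring $c_{\mathrm{top}}:A\to[m]$ that, level by level, separates these cross-coset interactions; the product $(c_{\mathrm{coset}},c_{\mathrm{top}})$ then has $m^n\cdot m=m^{n+1}$ classes, and by the preceding localisation each class is thin in $G$.

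The hard part is exactly the construction of $c_{\mathrm{top}}$. Here I would exploit the genuine strength of $m$-thinness that the trivial directions do not see: applied to a finite set $F$ with $|F|>m$, it forces almost every $a\in A$ to have fewer than $m$ of its $F$-translates in $A$, a degeneracy condition of the same flavour as the countable case. Well-ordering the filtration levels $\beta<\w_n$ and using this degeneracy to push every cross-coset alignment into a bounded ``earlier'' set, one should be able to colour the offending transitions with only $m$ colours, in the same way the base case of \cite{b20} colours a countable $m$-thin set with $m$ colours. Controlling the simultaneous contribution of \emph{all} levels $\beta$ to a single translation $g$, and verifying that the resulting cross-coset intersections are actually finite rather than merely small, is where the real work and the essential use of commutativity lie.
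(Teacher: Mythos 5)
Your reduction contains a step that is false, and it is load\nobreakdash-bearing: the claim that, because every class of $c_{\mathrm{coset}}$ is thin inside each single coset, this colouring ``disposes of all interactions $gS\cap S$ in which $g$ keeps $S$ inside its coset''. Thinness is not preserved by unions over infinitely many cosets. Fix $g$ of level $\beta$ and a colour class $S$: for each coset $C=G_\alpha x$ at a level $\alpha>\beta$ the set $g(S\cap C)\cap(S\cap C)$ is indeed finite, but there are up to $\aleph_n$ such cosets and each may contribute a nonempty piece, so $gS\cap S$ can be infinite. Concretely, let $G=\ZZ_2\oplus K$ with $K=\bigoplus_{\w_1}\ZZ_2$, let $g$ generate the $\ZZ_2$-summand, take a filtration with $g\in G_1$, let $k_\alpha$ be the $\alpha$-th basis vector of $K$, and put $A=\{k_\alpha,\ g+k_\alpha:\alpha<\w_1\}$. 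Then $A$ is $2$-thin, each pair $\{k_\alpha,g+k_\alpha\}$ lies in a single coset at level $>0=\gamma_1(g)$, and the coset trace of $A$ at every level is a two-element, hence already thin, subset of $G_\alpha$; so the inductive hypothesis, used as a black box coset by coset, is entitled to colour both points of every pair alike, and the resulting class $S$ has $gS\cap S$ uncountable. No choice of $c_{\mathrm{top}}$ repairs this, since by your own division of labour $c_{\mathrm{top}}$ only sees pairs meeting level $\beta$. What the theorem actually requires is that the $\aleph_n$ many per-coset partitions be \emph{coordinated}: for every $g$, all but finitely many cosets must receive a partition with no monochromatic $g$-pair. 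Producing that coordination is the substance of the theorem --- Theorems~\ref{t9.3} and~\ref{t9.4} show it is impossible in general, so it must use $|G|\le\aleph_n$ essentially --- and the inductive hypothesis by itself supplies nothing of the kind.

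The second gap you acknowledge yourself: $c_{\mathrm{top}}$ is never constructed, and the analogy with the countable case does not carry the weight you place on it. The argument of \cite{b20} rests on a linearly ordered cofinal chain of finite subsets exhausting $G$, which exists only when $G$ is countable; its absence at higher cardinalities is exactly why Bergman-type examples exist. Moreover, a ``level by level'' construction of $c_{\mathrm{top}}$ faces the same accumulation problem as above: for a fixed $g$, each level $\beta$ may leave finitely many unseparated cross-coset pairs, but $\w_n$ many levels can then leave infinitely many in total, so finiteness per level proves nothing. Note finally that the paper itself gives no proof of Theorem~\ref{t9.5} (it is quoted from \cite{b37}), so your argument has to stand alone; as written, neither factor of the proposed product colouring $(c_{\mathrm{coset}},c_{\mathrm{top}})$ does what is asserted of it.
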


By Theorem~\ref{t9.5}, $<\tau_g>=\bigcup_{m\in\NN}\tau_{G,m}$ for each Abelian group of cardinality $<\aleph_\w$, 
and, by Theorem~\ref{t9.4}, this statement does not hold for some Abelian groups of cardinality $\ge\aleph_\w$.

\begin{Qs} 
Can each $m$-thin subset of an Abelian group $G$ of cardinality $\aleph_1$ be partitioned into $\le m$ thin subsets? 
\end{Qs}

{\em Comments.} 
Theorems~\ref{t9.1} and~\ref{t9.2} are from \cite{b29} and \cite{b30}, the remaining results from \cite{b37}.

We remind that a subset $A$ of a group $G$ is thin if and only if $G$ is thin in the ballean $\BB_l(G,\aleph_0)$.
For partitions of a group into thin subsets in the balleans $\BB_l(G,\kk)$ for every infinite cardinal $\kk$ see \cite{b29}.

In \cite{b13}, Erdos and Kakutani proved that the Continuum Hypothesis is equivalent to the following combinatorial
statement: $\RR\setminus\{0\}$ can be partitioned in $\aleph_0$ subsets linearly independent over $\QQ$.
In \cite{b4}, the authors use thin partitions to reprove this theorem and extend it to some matroids.

\section{Miscellaneous}\label{s10}

{\bf 1. Partitions of free groups.} This subsection is about partitions of free group into large subsets, it 
contains two theorems from preprint \cite{b40}.

For a cardinal $\kk$, we denote by $F_\kk$ the free group in the alphabet $\kk$.
Given any  $g\in F_\kk\setminus\{e\}$ and $a\in\kk$, we write $\lambda(g)=a$ ($\rho(g)=a$) if the first (the last)
letter in the canonical representation of $g$ is either $a$ or $a^{-1}$.

\begin{Th}\label{t10.1} 
For any infinite cardinal $\kk$, the following statements hold

$(i)$ $F_\kk$ can be partitioned into $\kk$ left $3$-large subsets;

$(ii)$ $F_\kk$ can be partitioned into $\kk$ $4$-large subsets.
\end{Th}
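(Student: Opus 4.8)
The plan is to exploit the structure of reduced words in the free group, keyed on the first and last letters $\lambda(g)$ and $\rho(g)$, since these are exactly the invariants that control left and right multiplication by generators. For part $(i)$, I would define subsets indexed by the alphabet $\kk$ according to the value of $\lambda(g)$ (the first letter), together with a bounded amount of extra data to separate the cells, and then show that multiplying by a small fixed set $F$ of at most three elements (say $\{e,a,a^{-1}\}$ for a suitable generator $a$, or words of length $\le 1$) sweeps out all of $F_\kk$. The key observation is that if $w$ begins with a letter other than $a^{\pm1}$, then prepending $a$ or $a^{-1}$ produces no cancellation and lands in the cell tagged by $a$, while if $w$ already begins with $a^{\pm1}$ we leave it fixed; three elements suffice because one must handle the two sign possibilities of the leading letter plus the identity. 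I would set up the cells $A_a = \{g : \lambda(g) = a\}$ for $a \in \kk$ (with $e$ placed arbitrarily, or in a designated cell) and verify $F_\kk = F A_a$ for each $a$ with $|F|=3$.

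For part $(ii)$ the requirement is two-sided: each cell must be both left and right $4$-large. Here I would combine the information from both $\lambda(g)$ and $\rho(g)$. The natural partition indexes cells by a single letter $a \in \kk$, collecting those reduced words whose first letter equals a prescribed value, but the two-sided largeness forces a more symmetric choice. I would take cells of the form $A_a = \{g : \lambda(g)=a \text{ or } \rho(g) = a\}$ adjusted to a genuine partition (resolving the overlap by a fixed tie-breaking convention based on, say, which coordinate carries the tag). Then left-multiplication by $\{e,a,a^{-1}\}$ controls $\lambda$ and right-multiplication by $\{e,a,a^{-1}\}$ controls $\rho$; the count $4$ should emerge from needing the identity plus three nontrivial correctors, or from taking $F=\{e,a,a^{-1},a^2\}$ to cover the case where the relevant letter already coincides but with the wrong sign.

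The main obstacle will be organizing the bookkeeping so that the cells are genuinely pairwise disjoint and jointly cover $F_\kk\setminus\{e\}$ while simultaneously each cell is $3$-large (resp.\ $4$-large) on both sides without cancellation spoiling the argument. Cancellation is the delicate point: when I prepend or append a generator, I must guarantee that the product is still a reduced word landing in the intended cell, which fails precisely when the added letter cancels against the existing first or last letter. I expect to handle this by choosing the small set $F$ to include both $a$ and $a^{-1}$ (so at least one of them cannot cancel) and by routing the "already has the right leading letter" words to the identity element of $F$. Once the no-cancellation case analysis is pinned down, verifying $F_\kk = FA_a$ and $F_\kk = A_a F$ is a routine check over the finitely many possibilities for the first and last letters, so I would relegate that to a short verification rather than a long computation.
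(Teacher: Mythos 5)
Your plan for part $(i)$ coincides with the paper's proof in its main idea --- cells $P_a=\{g\in F_\kk\setminus\{e\}:\lambda(g)=a\}$ keyed by the unsigned first letter --- but you miscount the translating set, and under this paper's conventions the count \emph{is} the content of the statement: left $3$-large means $F_\kk=FP_a$ with $F\in[F_\kk]^{<3}$, i.e. $|F|\le2$, so your $F=\{e,a,a^{-1}\}$ only witnesses left $4$-largeness. The overcount comes from your worry about ``the two sign possibilities of the leading letter'': since the tag is unsigned (words beginning with $a$ and with $a^{-1}$ both lie in $P_a$), a single corrector suffices. If $\lambda(g)\ne a$ (including $g=e$), then $a^{-1}g$ is reduced and begins with $a^{-1}$, hence $a^{-1}g\in P_a$ and $g\in aP_a$; so $F_\kk=\{e,a\}P_a$, which is exactly the paper's argument. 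The same off-by-one would affect your part $(ii)$: a four-element set such as $\{e,a,a^{-1},a^2\}$ can only witness $5$-largeness, whereas the paper obtains $4$-largeness from the three-element set $\{e,x_\alpha,y_\alpha\}$ described below.

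The genuine gap is in part $(ii)$, precisely at the step you defer: ``resolving the overlap by a fixed tie-breaking convention.'' No tie-break that assigns each word to a cell as a function of its tag pair $(\lambda(g),\rho(g))$, with cells indexed by single letters, can work. Indeed, suppose cells $A_a\subseteq\{g:\lambda(g)=a\text{ or }\rho(g)=a\}$ are pairwise disjoint and each is left and right $n$-large for some finite $n$. Fix distinct letters $a,c$. Right largeness of $A_a$ means $F_\kk=A_aF$ with $F$ finite; applying this to $w=c^k$ with $k$ larger than the lengths of all members of $F$, every $wf^{-1}$ still has first-letter tag $c$, so $A_a$ must contain words with $\lambda=c$, and by the defining constraint these have $\rho=a$. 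Symmetrically, left largeness of $A_c$ applied to long words ending in $a^{\pm1}$ forces $A_c$ to contain words with $\lambda=c$ and $\rho=a$. Thus the words with tag pair $(c,a)$ must be \emph{split} between $A_a$ and $A_c$, which no rule depending only on the pair of tags can do --- and you propose no finer rule. The paper escapes this obstruction by re-indexing the cells: partition the alphabet into two-element sets $\kk=\bigcup_{\alpha<\kk}\{x_\alpha,y_\alpha\}$, put $X=\{x_\alpha:\alpha<\kk\}$, $Y=\{y_\alpha:\alpha<\kk\}$, and take $P_\alpha=L_\alpha\cup R_\alpha$, where $L_\alpha$ consists of the words whose $\lambda$-tag is $x_\alpha$ or $y_\alpha$ according to whether $\rho(g)\in X$ or $\rho(g)\in Y$, and $R_\alpha$ consists of the words whose $\rho$-tag is $x_\alpha$ or $y_\alpha$ according to whether $\lambda(g)\in Y$ or $\lambda(g)\in X$ --- note the deliberate swap of $X$ and $Y$. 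The swap makes $L_\alpha\cap R_\beta=\varnothing$ for \emph{all} $\alpha,\beta$, so the $P_\alpha$ are pairwise disjoint, while for every $g$ one has $\{e,x_\alpha,y_\alpha\}g\cap L_\alpha\ne\varnothing$ and $g\{e,x_\alpha,y_\alpha\}\cap R_\alpha\ne\varnothing$, giving left and right $4$-largeness of $P_\alpha$. This global two-coloring of the alphabet, not a per-letter tie-break, is the missing idea; without something like it, the ``routine check'' you postpone cannot be carried out.
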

\begin{proof}
$(i)$ For each $a\in\kk$, we put $P_a=\{g\in F_\kk\setminus\{e\}:\lambda(g)=a\}$ and note that $F_\kk=\{e,a\}P_a$.

$(ii)$ We partition $\kk$ into $2$-element  subsets $\kk=\bigcup_{\alpha<\kk}\{x_\alpha,y_\alpha\}$ and put $X=\{x_\alpha:\alpha<\kk\}$, $Y=\{y_\alpha:\alpha<\kk\}$.

For every $\alpha<\kk$, we denote 
$$L_\alpha=\{g\in F_\kk\setminus\{e\}:\lambda(g)=x_\alpha\Leftrightarrow \rho(g)\in X, \lambda(g)=y_\alpha\Leftrightarrow \rho(g)\in Y,\}$$
$$R_\alpha=\{g\in F_\kk\setminus\{e\}:\rho(g)=x_\alpha\Leftrightarrow \lambda(g)\in Y, \rho(g)=y_\alpha\Leftrightarrow \rho(g)\in X,\}$$
Then we put $P_\alpha=L_\alpha\cup R_\alpha$ and note that the subsets $\{P_\alpha:\alpha<\kk\}$ are pairwise disjoint.

Given any $g\in F_\kk$, we have
$${\{e,x_\alpha,y_\alpha\}g\cap L_\alpha\neq\varnothing},\text{ }g\{e,x_\alpha,y_\alpha\}\cap R_\alpha\neq\varnothing.$$
Hence, $L_\alpha$ is left $4$-large and $R_\alpha$ is right $4$-large, so $P_\alpha$ is $4$-large.
\end{proof}

\begin{Th}\label{t10.2}
For a natural number $n\ge2$, the following statements hold

$(i)$ $F_n$ can be partitioned into $\aleph_0$ left $3$-large subsets;

$(ii)$ $F_n$ can be partitioned into $\aleph_0$ $5$-large subsets.
\end{Th}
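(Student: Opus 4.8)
The plan is to mirror the first-letter construction of Theorem~\ref{t10.1}, the only genuinely new difficulty being that the alphabet is now finite, so the $\aleph_0$ cells can no longer be indexed by generators and must instead be indexed by data read off the reduced words, with each cell assigned its own ``direction'' deep in the tree.

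For $(i)$ I would first record the following reduction. Fix a cell $A$, write $B=F_n\setminus A$, and note that after a left translation any covering $F_n=\{f_1,f_2\}A$ may be taken with $f_1=e$. Then for $s\ne e$,
$$\{e,s\}A=F_n\iff A\cup sA=F_n\iff B\cap sB=\varnothing\iff s\notin BB^{-1},$$
and each of these is equivalent to the statement that $A$ meets every pair $\{x,sx\}$, $x\in F_n$. Thus $A$ is left $3$-large exactly when, along each orbit of the cyclic group $\langle s\rangle$, the cell $A$ omits no two consecutive points; I will call this being \emph{half-dense in the direction $s$}. The task becomes: partition $F_n$ into countably many cells $A_k$, to each of which is attached an element $s_k\ne e$ (automatically of infinite order), so that $A_k$ is half-dense in the direction $s_k$.

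The point where the hypothesis $n\ge2$ enters is precisely here. In an abelian group, and in $F_1=\ZZ$, every $s_k$ sees the same linear order, so half-density forces lower density $\ge\tfrac12$ and three pairwise disjoint left $3$-large cells cannot coexist; this is the density (Banach-measure) obstruction that rules out the abelian case. When $n\ge2$ the Cayley tree has infinitely many mutually independent directions, and I would use freeness to choose the $s_k$ so that their orbits run through disjoint portions of the reduced words. Each cell can then be half-dense along its own direction while contributing nothing to the density seen in any other direction, so that infinitely many disjoint half-dense cells can be fitted together. The main obstacle is the global bookkeeping: a single element lies on $\langle s_k\rangle$-lines for many $k$ at once, so each element must be assigned to one owning direction consistently. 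I would carry this out by a recursive reading of the reduced word along the rooted word-tree (parent $=$ delete the last letter), letting the first direction that ``claims'' $g$ own it and then placing every second point of each $\langle s_k\rangle$-orbit into $A_k$; checking that the resulting families are pairwise disjoint, exhaust $F_n$, and each meets every pair $\{x,s_kx\}$ is the delicate analogue of the transversal bookkeeping behind Theorem~\ref{t4}.

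For $(ii)$ I would run the left construction above to produce half-dense cells $L_\alpha$ governed by first letters, together with the symmetric right construction producing cells $R_\alpha$ governed by last letters, and set $P_\alpha=L_\alpha\cup R_\alpha$ exactly as in Theorem~\ref{t10.1}$(ii)$. Because the alphabet is finite, each one-sided covering now requires one extra shift element beyond the infinite-rank case, so the shift sets have four elements instead of three; this makes $L_\alpha$ left $5$-large and $R_\alpha$ right $5$-large, whence each $P_\alpha$ is $5$-large. The increment from ``$4$'' in Theorem~\ref{t10.1}$(ii)$ to ``$5$'' here is exactly the cost of working over a finite alphabet.
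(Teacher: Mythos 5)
Your reformulation at the start of $(i)$ is correct: a proper subset $A$ is left $3$-large iff $F_n=A\cup sA$ for some $s\neq e$, iff $A$ meets every pair $\{x,sx\}$; and you are right that distinct cells need essentially distinct directions and that freeness (whence $n\ge2$) is what must be exploited. But the proposal stops exactly where the theorem begins: no elements $s_k$ are ever exhibited, no assignment rule is ever defined, and the verification is explicitly deferred to unspecified ``bookkeeping.'' The constraint is far more rigid than your picture suggests. Writing $\chi(g)=k$ for $g\in A_k$, the requirement that $A_k$ meet $\{x,s_kx\}$ for \emph{every} $x\in F_n$ says: for every $x$ and every $k\neq\chi(x)$ one must have $\chi(s_kx)=k$. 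Thus the colors of all the points $s_kx$, $k\neq\chi(x)$, are forced, and the whole problem is to choose the $s_k$ and the coloring so that this forcing is consistent; a greedy rule (``the first direction that claims $g$ owns it,'' ``every second point of each orbit'') carries no reason to satisfy it. Your guiding principles also conflict with the requirement: the $\langle s_k\rangle$-orbits do not run through ``disjoint portions'' of $F_n$ --- for each $k$ they partition the entire group, and $A_k$ must meet every pair in every one of them --- and your tree with parent $=$ delete the \emph{last} letter is on the wrong side, since left translation alters prefixes. The missing idea is a concrete consistent scheme, for instance: take the free generators $t_k=a^{-k}ba^{k}$, $k\in\w$, of a free subgroup $H$ of $F_n$ of infinite rank, put $s_k=t_k$, fix a set $R$ of representatives of the right cosets $Hg$, and for $g=hx$, $h\in H$, $x\in R$, let $\chi(g)$ be the index of the generator with which the reduced word of $h$ in the letters $t_j^{\pm1}$ begins (say $\chi(g)=0$ if $h=e$). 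If $\chi(g)\neq k$ then $t_kh$ begins with $t_k$, so $\chi(t_kg)=k$; hence $F_n=\{e,t_k^{-1}\}A_k$ and each cell is left $3$-large. This transport of a first-letter coloring along cosets of an infinite-rank free subgroup (or an equivalent device) is the entire content of $(i)$, and it is absent from your argument. (The paper itself states Theorem~\ref{t10.2} without proof, citing \cite{b40}, so the comparison here is with the method of Theorem~\ref{t10.1}, which your sketch tries to imitate.)

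Part $(ii)$ has a larger gap. ``Run the left construction above'' refers to a construction that was never given, and no analogues of $L_\alpha$, $R_\alpha$ are defined. Moreover, the crux of Theorem~\ref{t10.1}$(ii)$ is not the size of the shift sets but the pairwise disjointness of the sets $P_\alpha=L_\alpha\cup R_\alpha$, in particular the cross-disjointness $L_\alpha\cap R_\beta=\varnothing$ for $\alpha\neq\beta$, which is arranged there by the biconditional linkage between first letters and the partition of an \emph{infinite} alphabet into pairs; you never indicate how cross-disjointness is to be achieved over a finite alphabet, where the cells must instead come from some coset-type construction on both sides simultaneously. Finally, your accounting for the constant $5$ --- ``each one-sided covering requires one extra shift element because the alphabet is finite'' --- is refuted by part $(i)$ of the very theorem being proved: the one-sided constant does not degrade at all when the alphabet becomes finite (it remains $3$). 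The increase from $4$ to $5$ must therefore come from the interaction of the left and right structures, and identifying that mechanism is precisely the work the proposal leaves undone.
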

\begin{Qs}
For an infinite cardinal $\kk$, can the free group $F_\kk$ be partitioned into $\kk$ $3$-large subsets?
\end{Qs}
\begin{Qs}
For a natural number $n\ge2$, can the free group $F_n$ be partitioned into $\aleph_0$ $4$-large subsets?
\end{Qs}

{\bf 2. Partitions of almost $P$-small subsets.} A subset $A$ of a group $G$ is called
\begin{itemize}
\item{} {\em $P$-small} if there exists an injective sequence $(g_n)_{n\in\w}$ in $G$ such that the subsets 
$\{g_nA:n\in\w\}$ are pairwise disjoint;
\item{} {\em almost $P$-small} if there exists an injective sequence $(g_n)_{n\in\w}$ in $G$ such that $g_nA\cap g_mA$
is finite for all distinct $m,n\in\w$.
\end{itemize}

{\em Every almost $P$-small subset of a group $G$ can be partitionwd into two $P$-small subsets.}

This unexpected theorem was proved in \cite[Theorem 3.5]{b19}.

{\bf 3. Packing and covering numbers.} 
For a subset $A$ of a group $G$, the packing and covering number of $A$ are defined as
\begin{itemize}
\item $pack(A)=\sup\{|S|:S\subseteq G,\{sA:s\in S\} \text{ is disjoint}\}$,
\item $cov(A)=\min\{|S|:S\subseteq G, G=SA\}$.
\end{itemize}

For some results and open problems concerning these numbers see \cite{b32}.

{\bf Kaleidoscopical configurations.}
Let $G$ be a group and let $X$ be a $G$-space endowed with the action $G\times X\to X,\text{ }(g,x)\mapsto gx$.
A subset $A$ is called a {\em kaleidoscopical configuration} if there exists a coloring $\chi:X\to|A|$ such that
$\chi|_{gA}$ is bijective for each $g\in G$.

For kaleidoscopical configurations see recent survey \cite{b43}.
The most intriguing open problem in this area: does there exist a finite kaleidoscopical configuration $K$, $|K|>1$
in $\RR^2$, where the plane is considered as a $G$-space under the action of the group of all isometries of $\RR^2$.
We know only that such a $K$ must contain (if exists) at least $5$ points.

Igor Protasov (i.v.protasov@gmail.com)\\
Department of Cybernetics, Kyiv University,\\
Volodymyrska 64, Kyiv 01033, Ukraine \\\\
Sergii Slobodianiuk (slobodianiuk@yandex.ru)\\
Department of  Mathematics and Mechanics, Kyiv University,\\
Volodymyrska 64, Kyiv 01033, Ukraine
\end{document}